\documentclass[oneside,a4paper,11pt,notitlepage]{article}
\usepackage[left=0.8in, right=0.8in, top=1.5in, bottom=1.5in]{geometry}

\usepackage[english]{babel}
\usepackage{amssymb}
\usepackage{lipsum} 
\usepackage{lmodern}
\usepackage{amsmath}
\usepackage{amsthm}
\usepackage{bm}
\usepackage{mathtools}
\usepackage{braket}
\usepackage{esint}

\usepackage{relsize}
\usepackage{minted}
\usepackage{mathtools}

\mathtoolsset{showonlyrefs}
\usepackage{enumitem}

\usepackage{enumitem}
\usepackage{booktabs}
\usepackage{graphicx}
\usepackage{tikz}
\usetikzlibrary{patterns}
\usepackage{multicol}
\usepackage{caption}
\usepackage{bigints}
\usepackage[skins,theorems]{tcolorbox}

\usepackage{listings}
\usepackage{xcolor}

\definecolor{codegreen}{rgb}{0,0.6,0}
\definecolor{codegray}{rgb}{0.5,0.5,0.5}
\definecolor{codepurple}{rgb}{0.58,0,0.82}
\definecolor{backcolour}{rgb}{0.95,0.95,0.92}

\lstdefinestyle{pythonstyle}{
    backgroundcolor=\color{backcolour},   
    commentstyle=\color{codegreen},
    keywordstyle=\color{magenta},
    numberstyle=\tiny\color{codegray},
    stringstyle=\color{codepurple},
    basicstyle=\ttfamily\footnotesize,
    breakatwhitespace=false,         
    breaklines=true,                 
    captionpos=b,                    
    keepspaces=true,                 
    numbers=left,                    
    numbersep=5pt,                  
    showspaces=false,                
    showstringspaces=false,
    showtabs=false,                  
    tabsize=2,
    language=Python
}

\tcbset{highlight math style={enhanced,
colframe=black,colback=white,arc=0pt,boxrule=1pt}}
\def\XXint#1#2#3{{\setbox0=\hbox{$#1{#2#3}{\int}$}
    \vcenter{\hbox{$#2#3$}}\kern-.5\wd0}}

\newcounter{theor}

\theoremstyle{definition}

\theoremstyle{plain}
\newtheorem{teorema}{Theorem}[section]

\newtheorem{lemma}[teorema]{Lemma}
\newtheorem{prop}[teorema]{Proposition}
\newtheorem{corollario}[teorema]{Corollary}
\newtheorem{conjecture}[teorema]{Conjecture}
\theoremstyle{definition}
\newtheorem{esempio}{Example}[section]
\newtheorem{oss}[esempio]{Remark}

\newtheorem*{open*}{Open problems}

\makeatletter
\newcommand{\myfootnote}[2]{\begingroup
	\def\@makefnmark{}%
	\addtocounter{footnote}{-1}%
	\footnote{\textbf{#1} #2}
	\endgroup}
\makeatother

\def\esc#1{\left\langle #1\right\rangle}
\definecolor{mygreen}{RGB}{0, 255, 0}

\usepackage{hyperref}
\hypersetup{linktoc=none, bookmarksnumbered, colorlinks=true, linkcolor=magenta, citecolor=cyan}

\title{Remarks on Brunn-Minkowski-type inequalities related to the Ornstein-Uhlenbeck operator}
\author{Francisco Marín Sola and Francesco Salerno}
\date{}

\newcommand{\Addresses}{{
\bigskip 
  
   \medskip

      F. Marín Sola (corresponding author): \texttt{francisco.marin7@um.es} 
   \medskip 

 \textsc{Department of Engineering and Technology of Computers, area of Applied Mathematics. Universidad de Murcia, Spain.}
 
 \medskip

    F. Salerno: \texttt{f.salerno@ssmeridionale.it} 
  
   \medskip 
 \textsc{Mathematical and physical sciences for advanced materials and technologies, Scuola superiore meridionale, Largo San Marcelino 10, 80138 Napoli, Italy.}

 \par\nopagebreak 

}} 

\begin{document}
\maketitle
\begin{abstract}
We investigate Brunn-Minkowski-type inequalities for the torsional rigidity $T_\gamma$ and the first eigenvalue $\lambda_\gamma$ associated with the Ornstein-Uhlenbeck operator. Counterexamples are provided showing that neither concavity nor convexity properties hold for $T_\gamma$ on general bounded convex sets.  We also demonstrate that log-concavity and log-convexity properties fail in this setting. In the case of centrally symmetric sets, we answer a question raised by Cordero-Erausquin and Eskenazis by showing that $T_\gamma^{1/(n+2)}$ is neither convex nor concave.  On the positive side, we prove that $T_\gamma^{1/3}$ is convex with respect to Minkowski addition when restricted to Euclidean balls centered at the origin. For $\lambda_\gamma$, we answer negatively a question posed by Colesanti, Francini, Livshyts, and Salani by showing that the inequality $\lambda_\gamma(\Omega_t)^{-1/2} \geq (1-t)\lambda_\gamma(\Omega_0)^{-1/2} + t\lambda_\gamma(\Omega_1)^{-1/2}$ does not hold, even for centrally symmetric sets. 
    \newline
    \newline
    \textsc{Keywords: Brunn-Minkowski inequality; Ornstein-Uhlenbeck operator; 
Gaussian torsional rigidity; Gaussian eigenvalue.}  
    \newline
    \textsc{MSC 2020: Primary 52A40, 35J25; Secondary 35P15.}   
\end{abstract}

\section{Introduction}\label{Introduction}
    Let $K_0$, $K_1$ be convex bodies (compact and convex subsets of $\mathbb{R}^n$ with non-empty interior) and  $t\in[0,1]$. The Brunn-Minkowski inequality states that
    \begin{equation}\label{BMvol}
        |K_t|^\frac{1}{n}\geq (1-t)|K_0|^\frac{1}{n}+t|K_1|^\frac{1}{n},
    \end{equation}
    where $K_t=(1-t)K_0+tK_1=\{(1-t)x+ty\,:\, x\in K_0,\, y\in K_1\}$. Here $|\cdot|$ stands for the $n$-dimensional Lebesgue measure or \emph{volume}. Equality holds in \eqref{BMvol} if and only if $K_0$ and $K_1$ are homothetic. For an extensive survey article on this and related inequalities we refer the interested reader to \cite{G20}.
    
    Inequalities of the same type as \eqref{BMvol}, where the volume is replaced by other functionals, arise in many different contexts (see e.g. \cite[Section~7]{Sh1}). In particular, for those coming from Calculus of Variations, a systematic study was given by Colesanti in \cite{C}. More generally, given a functional
    \begin{equation*}
        F:\mathcal{K}^n\rightarrow(0,+\infty),
    \end{equation*}
    where $\mathcal{K}^n$ is the class of all compact, convex subsets of $\mathbb{R}^n$, which is homogeneous of order $\alpha\not=0$, we say that $F$ satisfies a Brunn-Minkowski inequality if
    \begin{equation}\label{GenBM}
        F((1-t)K_0+tK_1)^\frac{1}{\alpha}\geq(1-t)F(K_0)^\frac{1}{\alpha}+tF(K_1)^\frac{1}{\alpha},
    \end{equation}
    for all $K_0,\,K_1\in\mathcal{K}^n$ and for all $t\in[0,1]$. Due to the homogeneity, to prove \eqref{GenBM}, it suffices to show that $F$ is quasi-concave (quasi-convex) if $\alpha>0$ ($\alpha<0$), namely
    \begin{equation}\label{q.c.}
        \begin{split}
            F((1-t)K_0+tK_1)&\geq\min\{F(K_0),\,F(K_1)\} \quad \text{ if } \alpha>0,\\
            F((1-t)K_0+tK_1)&\leq\max\{F(K_0),\,F(K_1)\} \quad \text{ if } \alpha<0.
        \end{split}
    \end{equation}

    In this paper, we are concerned with some geometric quantities related to the Ornstein-Uhlenbeck operator, namely the torsional rigidity $T_\gamma$ and the first eigenvalue $\lambda_\gamma$ associated to this operator (see \S \ref{Sec2} for the precise definition). Such quantities, usually referred to as the Gaussian torsional rigidity and the Gaussian eigenvalue, represent the counterpart, in Gauss space, of the torsional rigidity and principal frequency for the Laplace operator, whose corresponding Brunn-Minkowski inequalities were proved in \cite{Borell2} and \cite{BL76} respectively (see also \cite{C}). 
    
    In a recent work \cite{CFLS}, a Brunn-Minkowski-type inequality for $\lambda_\gamma$ is established. Specifically, they prove that the first eigenvalue is convex with respect to the Minkowski addition (see also \cite{CQS} for a different proof of this and \cite{CQS2} for the extension to the nonlinear setting);
    \begin{teorema}[Theorem 1.2, \cite{CFLS}]\label{t:BMeig}Let $\Omega_0$, $\Omega_1$ be open, bounded subsets of $\mathbb{R}^n$. Let $t\in[0,1]$. Then
    \begin{equation}\label{BMeig}
        \lambda_\gamma(\Omega_t)\leq(1-t)\lambda_\gamma(\Omega_0)+t\lambda_\gamma(\Omega_1),
    \end{equation}
    where
    \begin{equation*}
        \Omega_t=(1-t)\Omega_0+t\Omega_1.
    \end{equation*}
    \end{teorema}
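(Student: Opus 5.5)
The plan is to work with the log-transform of the first eigenfunction and run the Korevaar–Kennington concavity-function argument, as in the Brascamp–Lieb proof for the Laplacian. The Gaussian setting changes the functional's homogeneity, so convexity of $\lambda_\gamma$ itself is the natural target rather than a power of it.

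Let $u_i>0$ be the first Dirichlet eigenfunction of $L u=\Delta u - x\cdot\nabla u$ on $\Omega_i$, for $i=0,1$. It satisfies $-Lu_i=\lambda_\gamma(\Omega_i)u_i$ in $\Omega_i$ and $u_i=0$ on $\partial\Omega_i$. Set $v_i=\log u_i$, so that
\begin{equation*}
\Delta v_i+|\nabla v_i|^2-x\cdot\nabla v_i=-\lambda_i,\qquad \lambda_i:=\lambda_\gamma(\Omega_i).
\end{equation*}
Define the sup-convolution
\begin{equation*}
w(z)=\sup\{(1-t)v_0(x)+tv_1(y)\,:\,z=(1-t)x+ty,\ x\in\Omega_0,\ y\in\Omega_1\}.
\end{equation*}
It is defined on $\Omega_t$ and tends to $-\infty$ at $\partial\Omega_t$.

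The goal is to show that $U=e^{w}$ is a viscosity subsolution of $-LU\le \lambda U$ with $\lambda=(1-t)\lambda_0+t\lambda_1$.

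1. At a point $z$ where the sup is attained at an interior pair $(x,y)$, the Lagrange conditions give $\nabla v_0(x)=\nabla v_1(y)=:p$. Touching $w$ from above then gives a test function with gradient $p$.
2. The Hessian of any such test function is bounded above by the harmonic combination of the Hessians, and its trace is at most $(1-t)\Delta v_0(x)+t\Delta v_1(y)$. This is the standard Brascamp–Lieb / Alvarez–Lasry–Lions computation.
3. The first-order terms combine exactly, because the drift is linear:
\begin{equation*}
(1-t)\,x\cdot p+t\,y\cdot p=z\cdot p, \qquad (1-t)|p|^2+t|p|^2=|p|^2 .
\end{equation*}
Hence $\Delta w+|\nabla w|^2-z\cdot\nabla w\ge -\lambda$ in the viscosity sense, which is $-LU\le\lambda U$.

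Linearity of the drift is the key point that makes the Gaussian case work, even though $\lambda_\gamma$ is not homogeneous. If the proof only needs the comparison step, one can bypass the viscosity computation by working with smooth approximations, such as strictly convex smooth domains, and then passing to the limit.

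Conclude with the variational characterisation. Since $U>0$ in $\Omega_t$, $U=0$ on $\partial\Omega_t$, and $-LU\le\lambda U$, a Barta/Picone-type argument with respect to the Gaussian measure gives $\lambda_\gamma(\Omega_t)\le\lambda$. Concretely, one tests against the first eigenfunction $\phi$ of $\Omega_t$ and integrates $\phi\,LU-U\,L\phi$ against $d\gamma$, using the symmetry of $L$ in $L^2(\gamma)$.

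The main obstacle is the boundary and regularity analysis for general open bounded sets, which need not be convex or smooth.

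- Since the sets need not be convex, $v_i$ is not concave, and the sup need not be attained in the interior. One must show that maximizing sequences stay away from the boundary, which follows from $v_i\to-\infty$ there.
- For the final comparison, the subsolution property must hold in a weak or viscosity sense strong enough for the Picone argument. I would handle this by exhausting $\Omega_i$ with smooth domains, using monotonicity of $\lambda_\gamma$ under inclusion, and continuity of $\lambda_\gamma$ along increasing exhaustions.

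Alternatively, I would first try the Prékopa–Leindler route of \cite{CFLS}. Apply it to $e^{-\lambda_0 s}$-type heat semigroup quantities, using that the Ornstein–Uhlenbeck Dirichlet heat kernel satisfies a log-concavity-in-space-and-domain property, and that $\lambda_\gamma(\Omega)=-\lim_{s\to\infty}s^{-1}\log P_s^{\Omega}1(x)$.
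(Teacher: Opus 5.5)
The paper does not prove this statement (it is quoted from \cite{CFLS}), so your argument has to stand on its own. Its architecture is viable even for non-convex sets: log-transform, sup-convolution $w$, subsolution property of $U=e^{w}$ on $\Omega_t$, then comparison via the Rayleigh quotient. You also correctly single out the linearity of the drift as the reason the first-order terms combine. However, step 2, which carries the whole argument, is stated in the wrong direction. If $\phi$ touches $w$ from above at $z=(1-t)x+ty$, then $(x',y')\mapsto\phi((1-t)x'+ty')-(1-t)v_0(x')-tv_1(y')$ has a local \emph{minimum} at $(x,y)$. Differentiating twice along directions $(\xi,\xi)$ gives
\[
D^2\phi(z)\ \ge\ (1-t)D^2v_0(x)+tD^2v_1(y),\qquad\text{hence}\qquad \Delta\phi(z)\ \ge\ (1-t)\Delta v_0(x)+t\Delta v_1(y).
\]
It is this \emph{lower} bound that, combined with step 3, yields $\Delta\phi+|p|^2-z\cdot p\ge-\lambda$. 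The upper bound you assert cannot hold, since adding $C|z'-z|^2$ to $\phi$ keeps it touching from above. Even if it held, it would give the supersolution inequality instead. Drop the harmonic combination as well: it presupposes $D^2v_i<0$, i.e.\ log-concave eigenfunctions, which you do not have on non-convex $\Omega_i$. The arithmetic bound above uses no concavity at all.

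The closing step is also incomplete. For general open sets, $v_i\to-\infty$ fails at irregular boundary points (think of a punctured ball), so exhaustion by smooth $\Omega_i^k\Subset\Omega_i$ is necessary rather than optional. Working on a component carrying $\lambda_\gamma(\Omega_i^k)$, it gives eigenfunctions continuous up to the boundary and vanishing there. From this you get compact superlevel sets, interior maximizers, $U\to0$ on $\partial\Omega_t^k$, and the limit via $\Omega_t^k\subset\Omega_t$ and $\lambda_\gamma(\Omega_i^k)\downarrow\lambda_\gamma(\Omega_i)$.

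What exhaustion does not give is interior regularity of $U$, and a viscosity inequality cannot be inserted into Picone's identity. Smooth strictly convex approximations make no sense for non-convex sets. Even for convex ones, smoothness of $U$ would need strict log-concavity of $u_i$, itself a main result of \cite{CFLS}.

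The missing ingredient is semiconvexity. Near any $z_0$, $w$ is the supremum, over a compact set of $x$, of the functions $z\mapsto(1-t)v_0(x)+tv_1\big((z-(1-t)x)/t\big)$, whose Hessians are uniformly bounded. So $w$ and $U$ are locally semiconvex. At Alexandrov points the viscosity inequality holds pointwise, and the singular part of $D^2U$ is nonnegative, so $-LU\le\lambda U$ holds in the sense of distributions. (Alternatively, invoke Ishii's equivalence of viscosity and distributional subsolutions for linear equations.) Testing with $(U-\varepsilon)^+$ and letting $\varepsilon\to0$ then gives $U\in W^{1,2}_0(\Omega_t^k,\gamma)$ and $R_\gamma(U)\le(1-t)\lambda_\gamma(\Omega_0^k)+t\lambda_\gamma(\Omega_1^k)$, with no boundary terms.

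Your fallback heat-semigroup route is the Brascamp--Lieb argument and is sound in outline for arbitrary open sets. It only uses the joint log-concavity of the Mehler kernel and $\chi_{\Omega_t}((1-t)x+ty)\ge\chi_{\Omega_0}(x)^{1-t}\chi_{\Omega_1}(y)^t$.
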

    In the case of the first eigenvalue of the Laplacian, an inequality of the type \eqref{BMeig} implies the concavity of the functional $\lambda^{-\frac{1}{2}}(\,\cdot\,)$ thanks to its homogeneity. In contrast, due to the lack of it, an exponent Brunn-Minkowski inequality does not follow from \eqref{BMeig}. A similar phenomenon happens for the Gaussian measure; although a Brunn-Minkowski-type inequality follows from Borell's characterization \cite{Borell}, namely
    \begin{equation*}
        \gamma(\Omega_t)\geq \gamma(\Omega_0)^{1-t}\gamma(\Omega_1)^t,
    \end{equation*}
    an exponent version is not immediately deduced. Indeed, it was conjectured in \cite{GarZv} that an exponent version of the previous inequality, i.e., the inequality for $\gamma^{1/n}$, could be true for convex sets containing the origin. A counterexample to the latter was provided by Nayar and Tkocz in \cite{NayTko}, where the possibility of such an inequality for centrally symmetric convex sets remained open. This result was then achieved by Eskenazis and Moschidis in \cite{EskMos} (see also \cite{KolLiv} and the references therein for crucial contributions towards proving the conjecture). So a natural question, also posed in \cite{CFLS}, is whether a similar improvement holds for \eqref{BMeig}. We answer this question in the negative, at least if it is understood as a generalization of what happens in the Euclidean case. In particular, we show with a counterexample that an inequality of the type
    \begin{equation*}
        \lambda_\gamma(\Omega_t)^{-\frac{1}{2}}\geq (1-t)\lambda_\gamma(\Omega_0)^{-\frac{1}{2}}+t\lambda_\gamma(\Omega_1)^{-\frac{1}{2}}
    \end{equation*}
    does not hold even for centrally symmetric sets. 
    
    In view of the Euclidean case and Theorem \ref{t:BMeig}, it is also natural to wonder  whether a Brunn-Minkowski-type inequality holds true for the Gaussian torsional rigidity. In more detail, the following questions arise:
    \begin{itemize}
        \item [\textbf{(Q1)}] Do concavity or convexity properties hold for the Gaussian torsional rigidity with respect to the Minkowski sum for bounded open convex sets?
        \item [\textbf{(Q2)}] Does the case of centrally symmetric sets allow us to say anything more?
    \end{itemize}
    
    In this paper, we answer these questions as follows. Referring to (Q1), we show with two examples that both concavity and convexity properties cannot be true in general for open, bounded convex sets. Concerning question (Q2), and addressing \cite[Question 18]{CE}, we provide a counterexample showing that the functional $T_\gamma^{1/(n+2)}$ is neither concave nor convex for centrally symmetric convex sets. Specifically, considering $\Omega_i=B_{R_i}$ (an Euclidean ball of radius $R_i$ centered at the origin), $i=0,1$, we find that, in dimension $n\geq3$, no concavity or convexity property holds for the Gaussian torsion with exponent $1/(n+2)$. This will be the content of Example \ref{Example-Tor-Exp}.

    In light of the latter, one may wonder about the smallest exponent $\alpha$ such that the functional $T_\gamma^{\alpha}$ is convex (or concave) for centrally symmetric convex sets. We provide a partial answer; we show that, for Euclidean balls, the functional $T_\gamma^{1/3}$ is  convex with respect to the Minkowski addition. This is the content of the following theorem.
    \begin{teorema}\label{Teo1}
        Consider $R_0,\,R_1>0$, then
        \begin{equation}\label{BM}
            T_\gamma(B_{R_t})^\frac{1}{3}\leq (1-t)T_\gamma(B_{R_0})^\frac{1}{3}+tT_\gamma(B_{R_1})^\frac{1}{3},
        \end{equation}
        where $R_t=(1-t)R_0+tR_1$. Equality holds if and only if $R_0 = R_1$. Moreover the exponent is sharp.
    \end{teorema}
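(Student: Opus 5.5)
The plan is to reduce \eqref{BM} to a one-variable convexity statement. Since $R_t=(1-t)R_0+tR_1$ is affine in $t$, \eqref{BM} is exactly convexity of $f(R):=T_\gamma(B_R)^{1/3}$ on $(0,\infty)$. Strict convexity of $f$ then gives the equality case $R_0=R_1$.

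\textbf{Explicit formula.} The torsion function on $B_R$ is radial, $u=u(r)$, and solves
\[
u''+\Big(\tfrac{n-1}{r}-r\Big)u'=-1,\qquad u'(0)=0,\quad u(R)=0.
\]
Set $\Gamma(r)=\int_0^r s^{n-1}e^{-s^2/2}\,ds$. Integrating once gives
\[
-u'(r)=e^{r^2/2}r^{1-n}\Gamma(r).
\]
Up to the normalising constant $c_n$ of $\gamma$, $T_\gamma(B_R)=\int u\,d\gamma$. Integrating by parts in $r$, using $u(R)=0$ and the fact that $\Gamma$ vanishes at $0$, gives
\[
T(R):=T_\gamma(B_R)=c_n\int_0^R \psi(r)^2\,dr,\qquad \psi(r):=\Gamma(r)\,e^{r^2/4}\,r^{\frac{1-n}{2}} .
\]
Hence $T'=c_n\psi^2$ and $T''=2c_n\psi\psi'$. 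Convexity of $T^{1/3}$ is then equivalent to $3TT''\ge 2(T')^2$, which reads
\[
3\,\psi'(R)\int_0^R\psi^2\,dr\;\ge\;\psi(R)^3 .
\]

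\textbf{Key lemma: $\psi$ is convex on $(0,\infty)$.} Assume this for the moment. Since $\psi\ge0$ and $\psi(0)=0$, convexity makes $\psi$ nondecreasing, and $\psi(r)\ge \psi(R)-\psi'(R)(R-r)$ for all $r$. Squaring on the interval where the right-hand side is nonnegative, namely $r\in[R-\psi(R)/\psi'(R),R]$, and integrating gives
\[
\int_0^R\psi^2\,dr\ \ge\ \frac{\psi(R)^3}{3\psi'(R)},
\]
which is exactly the required inequality.

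Equality would force $\psi$ to be affine and to vanish identically on an initial segment. This is impossible, since $\psi(r)\sim r^{(n+1)/2}/n$ near $0$ and $\psi>0$ for $r>0$. So the inequality is strict, $f$ is strictly convex, and the equality case follows.

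\textbf{Main obstacle: proving the lemma.} I would write $\psi''\ge0$ as $(\log\psi)''+((\log\psi)')^2\ge0$, where
\[
(\log\psi)'=\frac{\Gamma'}{\Gamma}+\frac r2+\frac{1-n}{2r},\qquad \Gamma'(r)=r^{n-1}e^{-r^2/2}.
\]
The needed inequality then becomes a quadratic inequality in the ratio $m=\Gamma'/\Gamma$. This uses $m'=m\big(\tfrac{n-1}{r}-r\big)-m^2$, which follows from $\Gamma''/\Gamma'=\tfrac{n-1}{r}-r$. I would then insert Mills-ratio-type bounds on $m$, such as $m\le n/r$, together with a lower bound of the form $m\ge (n/r-r)_+$ (or a sharper rational bound), and check the sign case by case in $r$. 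Near $r=0$ convexity is clear from $\psi\sim r^{(n+1)/2}/n$ with $(n+1)/2\ge1$. For large $r$ the factor $e^{r^2/4}$ dominates. The delicate range is intermediate $r$, uniformly in $n$; there I expect to need a sharp enough bound on $m$, possibly proved via an ODE comparison for $m$.

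\textbf{Sharpness of the exponent.} Take $n=1$ and $R\to0$. Then $T(R)\sim cR^3$, so $T^\alpha\sim c^\alpha R^{3\alpha}$, which is not convex near $0$ when $\alpha<1/3$. Hence no exponent smaller than $1/3$ works for all dimensions.
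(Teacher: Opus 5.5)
Your proposal is correct and takes a genuinely different route from the paper. The step you flag as the main obstacle is in fact a one-line identity.

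\textbf{Closing the key lemma.} Carry out your own computation with $p=\frac{n-1}{r}-r$ and $m'=mp-m^2$. Since $(\log\psi)'=m-\tfrac12 p$, every term containing $m$ cancels, and $(\log\psi)''+((\log\psi)')^2=\tfrac14p^2-\tfrac12p'$. That is,
\[
\psi''=\Bigl(\tfrac14\bigl(\tfrac{n-1}{r}-r\bigr)^2+\tfrac{n-1}{2r^2}+\tfrac12\Bigr)\psi>0\qquad (r>0).
\]
This is just the Liouville normal form of $\Gamma''=p\,\Gamma'$, because $\psi=\Gamma/\sqrt{\Gamma'}$. No Mills-ratio bounds or case analysis are needed.

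\textbf{Minor points.} Use this strict convexity in the equality case too. For $n=1$ you have $\psi\sim r$, so the asymptotics near $0$ do not by themselves exclude $\psi$ being affine on $[0,R]$; the fact $\psi''>0$ does. Two further details should be made explicit:
\begin{enumerate}
\item The tangent-line interval lies in $[0,R]$ because $0=\psi(0)\ge\psi(R)-R\psi'(R)$. This inequality also gives $\psi'(R)>0$.
\item The sharpness heuristic becomes rigorous as follows. A convex $g$ with $g(0^+)=0$ has $g(R)/R$ nondecreasing. But $T(R)^\alpha/R\to\infty$ as $R\to0^+$ when $n=1$ and $\alpha<1/3$.
\end{enumerate}

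\textbf{Comparison with the paper.} The paper studies $\alpha_n(R)=1-TT''/(T')^2$ and identifies it with $v'(R)$ for $v=J/(hI)$. It then bounds the supremum through several steps: the limits at $0$ and $+\infty$ (Lemmas~\ref{Lemma1}--\ref{Lemma2}, the second via an ODE trapping argument for $Rv$), the monotonicity $h'>0$ (Lemma~\ref{Lemma3}), an estimate of $v'$ at critical points (Lemma~\ref{Lemma4}), and a compactness argument.

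Your proof replaces all of this with a few structural facts: $T'=c_n\psi^2$, $\psi(0)=0$, and $\psi''=V\psi$ with $V>0$. An elementary tangent-line estimate then finishes the argument. This yields strict convexity directly. It also explains why $1/3$ is the universal threshold: the extremal profile is $\psi$ affine, i.e.\ $T\sim R^3$, which is approached for $n=1$, $R\to0$.

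What the paper's finer analysis of $\alpha_n(R)$ keeps is dimension-dependent information, such as $\alpha_n(0^+)=1/(n+2)$. The tangent-line bound alone does not capture this.
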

    Regarding the sharpness of the exponent, one may ask about the smallest dimensional exponent $\alpha_n$ such that the functional $T_\gamma(B_R)^{\alpha_n}$ is convex. The answer to this question follows immediately from the proof of Theorem \ref{Teo1}. For the sake of completeness, we collect it in the next corollary. 
    \begin{corollario}
        Consider $R_0$, $R_1>0$, then
        \begin{equation*}
            T_\gamma(B_{R_t})^{\alpha_n}\leq (1-t)T_\gamma(B_{R_0})^{\alpha_n}+tT_\gamma(B_{R_1})^{\alpha_n},
        \end{equation*}
        where $R_t=(1-t)R_0+tR_1$ and
        \begin{equation*}
            \alpha_n=\sup_{R>0}\left\{\frac{R^{2n-2}e^{-R^2}}{\left(\int_0^R s^{n-1}e^{-\frac{s^2}{2}}\,ds \right)^3}\int_0^R\frac{\left(\int_0^t s^{n-1}e^{-\frac{s^2}{2}}\,ds \right)^2}{t^{n-1}e^{-\frac{t^2}{2}}}\,dt\left[\frac{n-1-R^2}{R^ne^{-\frac{R^2}{2}}}\int_0^R s^{n-1}e^{-\frac{s^2}{2}}\,ds-2 \right]+1\right\}.
        \end{equation*}
    \end{corollario}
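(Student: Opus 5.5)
The plan is to reduce the corollary to a one-variable convexity statement. Since $R_t=(1-t)R_0+tR_1$ is affine in $t$, the inequality for all $R_0,R_1,t$ is equivalent to convexity of $f(R):=T_\gamma(B_R)^{\alpha}$ on $(0,\infty)$. Writing $\tau(R)=T_\gamma(B_R)$, we have $(\tau^\alpha)''=\alpha\tau^{\alpha-2}\bigl(\tau\tau''+(\alpha-1)(\tau')^2\bigr)$. For $\alpha>0$, convexity is therefore equivalent to
\begin{equation*}
\alpha\geq 1-\frac{\tau(R)\tau''(R)}{\tau'(R)^2}\quad\text{for all } R>0 .
\end{equation*}
So the smallest admissible exponent is $\alpha_n=\sup_{R>0}\bigl(1-\tau\tau''/(\tau')^2\bigr)$. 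The task is then to compute $\tau,\tau',\tau''$ explicitly and check that this expression matches the displayed formula. This should be the same computation that underlies the proof of Theorem \ref{Teo1}, where presumably $\alpha_3$ and the general bound $\alpha_n\le 1/3$ are established.

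The first step is to compute $\tau$. The torsion function $u$ on $B_R$ solves $\Delta u-x\cdot\nabla u=-1$ with $u=0$ on $\partial B_R$, and it is radial: $u=u(r)$. Writing the equation in divergence form, $(r^{n-1}e^{-r^2/2}u')'=-r^{n-1}e^{-r^2/2}$. Set $g(r)=r^{n-1}e^{-r^2/2}$ and $G(r)=\int_0^r g$. Then $u'(r)=-G(r)/g(r)$ and $u(r)=\int_r^R G/g$. Using $T_\gamma=\int u\,d\gamma=\int|\nabla u|^2d\gamma$ (up to the normalizing constant $c_n=|S^{n-1}|(2\pi)^{-n/2}$, which cancels in the ratio $\tau\tau''/(\tau')^2$), we obtain
\begin{equation*}
\tau(R)=c_n\int_0^R\frac{G(s)^2}{g(s)}\,ds .
\end{equation*}
Alternatively, integrating by parts in $\int_0^R u\,g$ gives the same formula.

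The next step is to differentiate: $\tau'=c_nG^2/g$ and $\tau''=c_n\bigl(2G-G^2g'/g^2\bigr)$, with $g'/g=(n-1)/R-R$. Substituting, we get
\begin{equation*}
1-\frac{\tau\tau''}{(\tau')^2}=1+\frac{g^2}{G^3}\int_0^R\frac{G^2}{g}\left[\frac{n-1-R^2}{R\,g}G-2\right].
\end{equation*}
Since $R\,g(R)=R^ne^{-R^2/2}$ and $g^2=R^{2n-2}e^{-R^2}$, this is exactly the expression inside the supremum defining $\alpha_n$. This gives the corollary with sharpness, because convexity fails near any $R$ where the expression exceeds the chosen exponent.

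The main point needing care is that $\alpha_n$ is finite and positive, which the sign analysis in the $\alpha>0$ equivalence requires. Near $R\to0$ we have $\tau\sim cR^{n+2}$, so the expression tends to $1-\frac{n+1}{n+2}=\frac{1}{n+2}>0$; hence $\alpha_n>0$. As $R\to\infty$, $\tau'$ decays like $e^{-R^2/2}$, so $\tau''<0$ and the expression exceeds $1$. I would therefore verify, presumably as in the proof of Theorem \ref{Teo1}, that the supremum is finite. Positivity then makes the equivalence above valid, and if the supremum is not attained, the limiting case is handled by continuity.
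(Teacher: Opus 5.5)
Your reduction is correct and is the one the paper uses in the proof of Theorem~\ref{Teo1}: for $\alpha>0$, convexity of $R\mapsto T_\gamma(B_R)^\alpha$ is equivalent to $\alpha\ge 1-\tau\tau''/(\tau')^2$ for every $R$. Your formulas for $\tau,\tau',\tau''$ and the limit $1/(n+2)$ as $R\to0^+$ are also correct. The gap is the finiteness of $\alpha_n$. Beyond the reduction, this is the only real content of the corollary, and you leave it unproved. The heuristic you give for large $R$ is also false. Since $\tau'(R)=c_nG(R)^2/g(R)$ with $G(R)\to\int_0^\infty s^{n-1}e^{-s^2/2}\,ds>0$ and $g(R)=R^{n-1}e^{-R^2/2}\to0$, $\tau'$ does not decay. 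It blows up like $R^{1-n}e^{R^2/2}$, $\tau''>0$ there, and $1-\tau\tau''/(\tau')^2\to0$ (this is Lemma~\ref{Lemma2}), not to a value above $1$. Your claim also contradicts the bound $\alpha_n\le 1/3$ that you attribute to Theorem~\ref{Teo1}.

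What you need is an upper bound on $1-\tau\tau''/(\tau')^2$ that is uniform in $R$. The paper proves the sharp one, $\alpha_n(R)=v'(R)\le 1/3$, through Lemmas~\ref{Lemma1}--\ref{Lemma4}: the limits at $0$ and $\infty$, plus the bound at critical points. For the corollary alone a cheaper argument suffices.

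\begin{enumerate}
\item Note that $\alpha_n(R)<1$ iff $\tau''(R)>0$ iff $2-p(R)G(R)/g(R)>0$, where $p=g'/g=(n-1)/R-R$.
\item Compute $(g-pG)'=pg-p'G-pg=-p'G$, which is positive because $p'=-(n-1)/R^2-1<0$.
\item As $R\to0^+$, $g-pG$ tends to $1$ if $n=1$ and to $0$ if $n\ge2$. Hence $g-pG>0$ on $(0,\infty)$, i.e.\ $pG/g<1<2$. This is exactly $h'>0$ in Lemma~\ref{Lemma3}.
\end{enumerate}

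So $\alpha_n\in[1/(n+2),1]$. Then $\alpha_n\ge\alpha_n(R)$ for every $R$ gives $(\tau^{\alpha_n})''\ge0$ everywhere. No continuity argument about whether the supremum is attained is needed.
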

    We observe in pass that, if $T_\gamma^\alpha$ is convex for some $\alpha >0$, then $T_\gamma^\beta$ is convex for any $\beta >\alpha$. The following relationship between the various exponent discussed above can be inferred from the proof of Theorem \ref{Teo1}:
        \begin{equation*}
            \frac{1}{n+2}\leq \alpha_n\leq \sup_{n\geq1}\alpha_n=\frac{1}{3}.
        \end{equation*}
        Note that when $n=1$ the first inequality holds as an equality. Numerical evidence suggest equality when $n = 2$, whereas the inequality is strict for $n\geq3$. 
    
    The proof of a more general result seems to require a greater understanding of why convexity fails in the general case and how central symmetry comes into play. In light of Theorem \ref{Teo1} and the counterexamples found, it may be reasonable to conjecture the following.
    \begin{conjecture}
        Let $\Omega_0$, $\Omega_1$ be two open, bounded and centrally symmetric subsets of $\mathbb{R}^n$. Then
        \begin{equation*}
            T_\gamma(\Omega_t)^\frac{1}{3}\leq (1-t)T_\gamma(\Omega_0)^\frac{1}{3}+tT_\gamma(\Omega_1)^\frac{1}{3},
        \end{equation*}
        where $\Omega_t = (1-t)\Omega_0 + t \Omega_1$. Equality holds if and only if $\Omega_0 = \Omega_1$.
    \end{conjecture}
    
    Finally, the case of unbounded sets is investigated. In the next result, we show that, for a particular family of unbounded sets, namely half-spaces, the Gaussian torsional rigidity is also convex.
    \begin{prop}\label{TeoHalfSp}
        Let $s\in\mathbb{R}$ and consider the half-space
        \begin{equation*}
            H_s=\{x=(x_1,\dots,x_n)\in\mathbb{R}^n\,:\, x_1\geq s\}.
        \end{equation*}
        Then, for all $s_0,\,s_1\in\mathbb{R}$
        \begin{equation*}
            T_\gamma(H_{s_t})\leq(1-t)T_\gamma(H_{s_0})+tT_\gamma(H_{s_1}),
        \end{equation*}
        where $s_t=(1-t)s_0+ts_1$.
    \end{prop}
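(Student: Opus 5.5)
The plan is to compute $T_\gamma(H_s)$ explicitly as a function of $s$ and check that its second derivative is nonnegative. Throughout I use the standard definition $T_\gamma(\Omega)=\int_\Omega u\,d\gamma$, where $u$ solves $\Delta u - x\cdot\nabla u=-1$ in $\Omega$ with $u=0$ on $\partial\Omega$. For unbounded $\Omega$ I take $u$ to be the solution in the Gaussian Sobolev space, i.e. with $\int_\Omega|\nabla u|^2\,d\gamma<\infty$. Equivalently, $T_\gamma(\Omega)=\int_\Omega|\nabla u|^2\,d\gamma$, by testing the equation against $u$.

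\textbf{Step 1: reduce to one dimension.} The problem on $H_s$ is invariant under translations in $x_2,\dots,x_n$, and $\gamma$ is a product measure. Hence $u(x)=v(x_1)$, and the $(n-1)$-dimensional Gaussian factor integrates to $1$. Write $\varphi(r)=(2\pi)^{-1/2}e^{-r^2/2}$ and $\bar\Phi(r)=\int_r^\infty\varphi$. The ODE $v''-rv'=-1$ on $(s,\infty)$ becomes
\begin{equation*}
(\varphi v')'=-\varphi ,\qquad\text{so}\qquad \varphi(r)v'(r)=A-\int_s^r\varphi .
\end{equation*}
The finiteness condition $\int_s^\infty (v')^2\varphi<\infty$ forces $\varphi v'\to 0$ as $r\to\infty$. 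This gives $A=\bar\Phi(s)$ and therefore $v'(r)=\bar\Phi(r)/\varphi(r)$.

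\textbf{Step 2: explicit formula.} Using $T_\gamma=\int|\nabla u|^2\,d\gamma$ we get
\begin{equation*}
T_\gamma(H_s)=\int_s^\infty\frac{\bar\Phi(r)^2}{\varphi(r)}\,dr=:f(s).
\end{equation*}
This integral is finite because the Mills ratio satisfies $\bar\Phi/\varphi\sim 1/r$ as $r\to\infty$. I will also justify the identity $\int u\,d\gamma=\int|\nabla u|^2d\gamma$ on the unbounded domain by truncation, which is routine given this decay.

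\textbf{Step 3: convexity.} Differentiating, using $\bar\Phi'=-\varphi$ and $\varphi'=-r\varphi$:
\begin{equation*}
f'(s)=-\frac{\bar\Phi(s)^2}{\varphi(s)},\qquad f''(s)=2\bar\Phi(s)-\frac{s\,\bar\Phi(s)^2}{\varphi(s)}=\bar\Phi(s)\Bigl(2-\frac{s\,\bar\Phi(s)}{\varphi(s)}\Bigr).
\end{equation*}
For $s\le 0$ both terms give $f''(s)>0$ immediately. For $s>0$ I invoke the classical Mills-ratio bound $\bar\Phi(s)<\varphi(s)/s$, which follows from $\int_s^\infty\varphi(r)\,dr<\int_s^\infty\frac{r}{s}\varphi(r)\,dr$. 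This gives $s\bar\Phi(s)/\varphi(s)<1<2$, so $f''>0$ everywhere. Thus $f$ is strictly convex on $\mathbb{R}$, which is exactly the claimed inequality.

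The main point of care is Step 1: choosing the correct solution on the unbounded domain, i.e. fixing $A$ via the weighted $H^1$ condition, and justifying the energy identity there. Once that is settled, Step 3 is elementary.
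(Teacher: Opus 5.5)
Your proof is correct. It reaches the paper's formula for $T_\gamma(H_s)$ and the paper's reduction to $2-s\bar\Phi(s)/\varphi(s)\ge 0$, but it gets there and finishes differently.

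\textbf{Comparison with the paper.} The paper cites \cite[\S2.3]{HL} for the expression of $T_\gamma(H_s)$. You instead derive it, fixing the constant through the weighted energy condition and using $T_\gamma=\int|\nabla u|^2\,d\gamma$. For the key inequality, the paper shows that $s e^{s^2/2}\int_s^{\infty}e^{-t^2/2}\,dt$ is increasing and tends to $1$. Monotonicity comes from the lower Mills-ratio bound of Lemma \ref{lemmaTeoHS}, and the limit from L'H\^opital. You apply the upper Mills-ratio bound $\bar\Phi(s)<\varphi(s)/s$ directly. This gives the same bound $<1$ in one line, makes Lemma \ref{lemmaTeoHS} unnecessary, and yields strict convexity. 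The paper's route additionally shows that $1$ is the sharp value of the supremum. That is irrelevant here, since only the bound $2$ is needed.

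\textbf{A justification to fix in Step 1.} The problem on $H_s$ is \emph{not} invariant under translations in $x_2,\dots,x_n$: neither $\gamma$ nor the drift term $x\cdot\nabla$ is translation invariant. The conclusion $u(x)=v(x_1)$ is still correct, and either of these arguments gives it.
\begin{enumerate}
\item Check directly that $v(x_1)$, with your choice $A=\bar\Phi(s)$, lies in $W^{1,2}_0(H_s,\gamma)$ and is a weak solution. Then invoke uniqueness: testing the difference of two weak solutions against itself shows its gradient vanishes, so it is zero by the boundary condition.
\item In the variational definition, replace a competitor $w$ by its average $\int w(x_1,x')\,d\gamma_{n-1}(x')$. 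This preserves $\int w\,d\gamma$ and, by Jensen, does not increase the Dirichlet energy. Here the product structure of $\gamma$, which you also invoked, is what does the work.
\end{enumerate}
In the weak formulation, the identity $\int u\,d\gamma=\int|\nabla u|^2\,d\gamma$ follows immediately by testing with $u$ itself, so the truncation you mention is not needed.
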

    The paper is organized as follows. In Section \ref{Sec2} we recall some preliminary tools about the Gaussian measure and define the Gaussian eigenvalue and torsional rigidity. In Section \ref{Counterexample} we discuss the counterexamples: first for the Gaussian torsional rigidity, answering question \textbf{(Q1)}; then for its exponent version, and finally for  the exponent version of the Brunn-Minkowski inequality for the Gaussian eigenvalue. Section \ref{Proof of the results} is devoted to the proof of Theorem \ref{Teo1}, while that of Proposition \ref{TeoHalfSp} is deferred to Appendix \ref{Appendix}. We conclude in Appendix \ref{Appendix} by discussing the numerical evidence, when $n=2$, for the Brunn-Minkowski inequality for the torsion functional with exponent $1/(n+2)$. 
    
\section{Preliminaries}\label{Sec2}
    \subsection*{The Gauss space}
        Consider the Gaussian measure
        \begin{equation*}
            \gamma(\Omega)=(2\pi)^{-\frac{n}{2}}\int_\Omega e^{-\frac{|x|^2}{2}}\,dx,
        \end{equation*}
        where $\Omega\subset\mathbb{R}^n$, $n\geq1$, is Lebesgue measurable. The Gauss space is the space $\mathbb{R}^n$ endowed with the measure $\gamma$. We observe that $(\mathbb{R}^n,\gamma)$ is a probability space. Note that $\gamma$ is not invariant under translation, while it is invariant under rotations about the origin. We mention that, given a measurable set $\Omega$, it is possible to define the Gaussian perimeter of $\Omega$ in $\mathbb{R}^n$. Then, the Gaussian isoperimetric inequality (we refer to \cite[\S 5.7]{F} for the precise definition of the Gaussian perimeter and for a comprehensive discussion on the inequality) states that, among all subsets of $\mathbb{R}^n$ with prescribed Gaussian measure, half-spaces have the least Gaussian perimeter. We will return in the next two subsections to the role of half-spaces as isoperimetric figures. In what follows, given an open, bounded set $\Omega\subset\mathbb{R}^n$, $W^{1,2}(\Omega,\gamma)$ will denote the weighted Sobolev space of $L^2$ functions with weak first derivative in $L^2$. Then, $W_0^{1,2}(\Omega,\gamma)$ will be the space of $W^{1,2}(\Omega,\gamma)$ functions with zero-trace.
        
    \subsection*{The first eigenvalue of the Ornstein-Uhlenbeck operator}
        Let $\Omega\subset\mathbb{R}^n$ be an open, bounded set with Lipschitz boundary. The Gaussian version of the principal frequency can be defined as
        \begin{equation}\label{infEig}
            \lambda_\gamma(\Omega)=\inf_{w\in W^{1,2}_0(\Omega,\gamma)\setminus\{0\}}R_\gamma(w), 
        \end{equation}
        where
        \begin{equation*}
            R_\gamma(w)=\frac{\int_\Omega |\nabla w|^2\,d\gamma}{\int_\Omega w^2 \,d\gamma}.
        \end{equation*}

        The next result shows that \eqref{infEig} is actually a minimum.
        \begin{teorema}
            Let $\Omega\subset\mathbb{R}^n$ be an open, bounded set with Lipschitz boundary. Then there exists $v\in W^{1,2}_0(\Omega,\gamma)\setminus\{0\}$ such that $\lambda_\gamma(\Omega)=R_\gamma(v).$
        \end{teorema}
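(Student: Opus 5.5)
The plan is to apply the direct method of the calculus of variations to the Rayleigh quotient $R_\gamma$, working in the weighted space $W^{1,2}_0(\Omega,\gamma)$. The first observation is that, since $\Omega$ is bounded, the Gaussian density $(2\pi)^{-n/2}e^{-|x|^2/2}$ is bounded above and below by positive constants on $\overline{\Omega}$: it lies between $(2\pi)^{-n/2}e^{-M^2/2}$ and $(2\pi)^{-n/2}$, where $M=\sup_{x\in\Omega}|x|$. Hence $L^2(\Omega,\gamma)$ coincides with $L^2(\Omega)$ and $W^{1,2}_0(\Omega,\gamma)$ coincides with $W^{1,2}_0(\Omega)$, with equivalent norms. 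This reduces everything to the classical unweighted setting.

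Next we take a minimizing sequence $(w_k)\subset W^{1,2}_0(\Omega,\gamma)\setminus\{0\}$ with $R_\gamma(w_k)\to\lambda_\gamma(\Omega)$. By the $2$-homogeneity of the quotient we may normalize $\int_\Omega w_k^2\,d\gamma=1$. Then $\int_\Omega|\nabla w_k|^2\,d\gamma$ is bounded, so $(w_k)$ is bounded in $W^{1,2}_0(\Omega,\gamma)$, and therefore bounded in $W^{1,2}_0(\Omega)$ by the norm equivalence. By reflexivity, up to a subsequence, $w_k\rightharpoonup v$ weakly in $W^{1,2}_0(\Omega)$. Since $\Omega$ is bounded, Rellich--Kondrachov gives $w_k\to v$ strongly in $L^2(\Omega)$; extending by zero avoids any boundary regularity issue, and in any case $\partial\Omega$ is Lipschitz. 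Strong convergence transfers to $L^2(\Omega,\gamma)$, so $\int_\Omega v^2\,d\gamma=1$ and in particular $v\neq0$.

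It remains to show lower semicontinuity of the numerator. The map $w\mapsto\int_\Omega|\nabla w|^2\,d\gamma$ is a continuous convex quadratic form on $W^{1,2}_0(\Omega,\gamma)$, hence weakly lower semicontinuous. Equivalently, $\nabla w_k\rightharpoonup\nabla v$ weakly in $L^2(\Omega,\gamma;\mathbb{R}^n)$, and norms are weakly l.s.c. This yields $R_\gamma(v)\le\liminf_k R_\gamma(w_k)=\lambda_\gamma(\Omega)$, and combined with the definition \eqref{infEig} we get equality.

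No step is a serious obstacle. The only point needing care is the compact embedding for the weighted space, and this is handled precisely by the comparability of the Gaussian weight with Lebesgue measure on bounded sets; it is also the only place where boundedness of $\Omega$ is used.
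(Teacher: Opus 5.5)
Your argument is correct: the two-sided bounds on the Gaussian density over the bounded set $\Omega$ identify $W^{1,2}_0(\Omega,\gamma)$ with $W^{1,2}_0(\Omega)$, and then Rellich--Kondrachov together with weak lower semicontinuity of the Dirichlet energy give a minimizer normalized by $\int_\Omega v^2\,d\gamma=1$. The paper states this result without proof as a standard fact, and your direct-method argument is the standard proof it implicitly relies on; as you note, the Lipschitz regularity of $\partial\Omega$ is not actually needed.
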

        The following result characterizes \eqref{infEig} as a solution to a partial differential equation (see e.g. \cite[Proposition~2.10]{HL}). 
        \begin{teorema}
            Let $\Omega\subset\mathbb{R}^n$ be an open, bounded set with Lipschitz boundary and let $v\in W^{1,2}_0(\Omega,\gamma)\setminus\{0\}$ be a minimizer in \eqref{infEig}. Then $v$ satisfies
            \begin{equation}\label{Probeigen}
                \begin{cases}
                    -\mathcal{L_\gamma}v=\lambda_\gamma(\Omega)v & \text{ in }\Omega\\
                    v=0 & \text{ on } \partial\Omega.
                \end{cases}
            \end{equation}
        \end{teorema}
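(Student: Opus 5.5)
The statement asks us to show that a minimizer $v$ of \eqref{infEig} is a weak solution of \eqref{Probeigen}. The plan is the standard Euler--Lagrange argument for the Rayleigh quotient $R_\gamma$, combined with an integration by parts formula adapted to the Gaussian weight. Throughout, $\mathcal{L}_\gamma$ is the Ornstein--Uhlenbeck operator
\[
\mathcal{L}_\gamma w=\Delta w - x\cdot\nabla w ,
\]
and the key identity is that for smooth $w$ and $\varphi\in C_c^\infty(\Omega)$,
\[
\int_\Omega \nabla w\cdot\nabla\varphi\,d\gamma=-\int_\Omega \mathcal{L}_\gamma w\,\varphi\,d\gamma .
\]
This follows from the divergence theorem applied to $\varphi\, e^{-|x|^2/2}\nabla w$, together with $\nabla e^{-|x|^2/2}=-x\,e^{-|x|^2/2}$. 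So the weak formulation of \eqref{Probeigen} is
\[
\int_\Omega \nabla v\cdot\nabla\varphi\,d\gamma=\lambda_\gamma(\Omega)\int_\Omega v\varphi\,d\gamma\qquad\text{for all } \varphi\in W^{1,2}_0(\Omega,\gamma),
\]
and this identity is what I will derive. The boundary condition $v=0$ on $\partial\Omega$ is encoded in $v\in W^{1,2}_0(\Omega,\gamma)$.

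\textbf{First variation.} Fix $\varphi\in W^{1,2}_0(\Omega,\gamma)$. For $|\varepsilon|$ small, $v+\varepsilon\varphi\neq 0$, because $\int_\Omega v^2\,d\gamma>0$ and the map $\varepsilon\mapsto\int_\Omega (v+\varepsilon\varphi)^2\,d\gamma$ is continuous. Set $f(\varepsilon)=R_\gamma(v+\varepsilon\varphi)$. Both numerator and denominator are quadratic polynomials in $\varepsilon$, so $f$ is differentiable near $0$. Since $v$ is a minimizer, $f$ attains its minimum at $\varepsilon=0$, hence $f'(0)=0$. Writing $N=\int_\Omega|\nabla v|^2\,d\gamma$ and $D=\int_\Omega v^2\,d\gamma$, this gives
\[
\frac{2\int_\Omega\nabla v\cdot\nabla\varphi\,d\gamma\; D-2N\int_\Omega v\varphi\,d\gamma}{D^2}=0 .
\]
Because $N/D=\lambda_\gamma(\Omega)$, this is exactly the weak formulation above.

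\textbf{From weak to strong form.} On any compact subset of $\Omega$ the Gaussian weight is bounded above and below by positive constants, so $W^{1,2}_{loc}(\Omega,\gamma)=W^{1,2}_{loc}(\Omega)$. The weak equation can be rewritten as $\operatorname{div}(e^{-|x|^2/2}\nabla v)=-\lambda_\gamma(\Omega)\,e^{-|x|^2/2}v$ in the distributional sense. This is a uniformly elliptic equation with smooth coefficients, so standard interior regularity, bootstrapped, gives $v\in C^\infty(\Omega)$. Then the integration by parts identity above turns the weak form into
\[
\int_\Omega\bigl(-\mathcal{L}_\gamma v-\lambda_\gamma(\Omega)v\bigr)\varphi\,d\gamma=0\qquad\text{for all }\varphi\in C_c^\infty(\Omega),
\]
and therefore $-\mathcal{L}_\gamma v=\lambda_\gamma(\Omega)v$ pointwise in $\Omega$.

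The only delicate point is interpreting the boundary condition and the regularity, which I expect to be the main obstacle. With Lipschitz boundary one cannot expect $v\in C^2(\overline\Omega)$, so I would state the result in the weak sense, with the boundary condition understood in the trace sense. For the regularity step I would invoke the classical elliptic theory, for instance as in \cite{HL}, rather than redo it.
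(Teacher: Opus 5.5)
Your proposal is correct. The paper gives no argument of its own here and simply cites \cite[Proposition~2.10]{HL}; your first-variation computation for $R_\gamma$, followed by interior elliptic regularity, is the standard proof behind that reference, with $v=0$ on $\partial\Omega$ understood in the trace sense via $v\in W^{1,2}_0(\Omega,\gamma)$. One small simplification: since $\Omega$ is bounded, $e^{-|x|^2/2}$ is bounded above and below on all of $\Omega$, so $W^{1,2}_0(\Omega,\gamma)=W^{1,2}_0(\Omega)$ with equivalent norms globally, not just locally, and you can quote unweighted elliptic theory directly.
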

        \begin{oss}
            Since $R_\gamma(|v|)=R_\gamma(v)$, we can choose a nonnegative minimizer $v\geq0$. Furthermore, from \eqref{Probeigen} and the maximum principle, we can conclude that $v>0$ in $\Omega$ and $v=0$ on $\partial\Omega$.      
        \end{oss}
        We observe that the solution to \eqref{Probeigen} is unique up to a positive multiplicative constant, that is, the first eigenvalue $\lambda_\gamma$ is simple. Finally, the definition of the eigenvalue can be extended to unbounded domains. We refer, for instance, to \cite{E}, where in particular it is proven that half-spaces minimize the first eigenvalue among sets of given Gaussian measure, i. e.,  for every $\Omega\subset\mathbb{R}^n$ open
        \begin{equation*}
            \lambda_\gamma(\Omega)\geq\lambda_\gamma(H),
        \end{equation*}
        where $H$ is a half-space such that $\gamma(\Omega)=\gamma(H)$ (see also \cite{BCF} for more details on the topic, \cite{CCLMP} for a quantitative improvement and \cite{CG} for the extension to Robin boundary condition). This result represents the Gaussian counterpart of the Faber-Krahn inequality for the first eigenvalue of the Laplacian.
    \subsection*{The torsional rigidity for the Ornstein-Uhlenbeck operator}
        Let $\Omega\subset\mathbb{R}^n$ be an open, bounded set with Lipschitz boundary. The Gaussian torsional rigidity can be defined as
        \begin{equation*}
            T_\gamma(\Omega)=\sup_{w\in W^{1,2}_0(\Omega,\gamma)}\frac{\left(\int_\Omega w\,d\gamma\right)^2}{\int_\Omega
             |\nabla w|^2\,d\gamma}.
        \end{equation*}
        As in the case of the first eigenvalue, it is possible to show that this supremum is actually a maximum, attained by the solution of
        \begin{equation}\label{probTor}
            \begin{cases}
                -\mathcal{L}_\gamma v=1 & \text{ in } \Omega\\
                v=0 & \text{ on } \partial\Omega.
            \end{cases}
        \end{equation}
        Moreover, it is not difficult to check that 
        \begin{equation}\label{e:TorFormula}
            T_\gamma(\Omega) = \int_\Omega v(x) \,d\gamma(x),
        \end{equation}
        where $v$ is the solution to \eqref{probTor}. 
        
        We observe that the definition of Gaussian torsional rigidity can be extended to unbounded domains. We refer, again, to \cite{E}, where it is proved that half-spaces maximize the Gaussian torsional rigidity among sets of given Gaussian measure. Specifically,  for every $\Omega\subset\mathbb{R}^n$ open
        \begin{equation*}
            T_\gamma(\Omega)\leq T_\gamma(H),
        \end{equation*}
        where $H$ is a half-space such that $\gamma(\Omega)=\gamma(H)$ (see also \cite{CGNT} for the extension to Robin boundary condition). This result represents the Gaussian counterpart of the Saint-Venant inequality for the torsional rigidity of the Laplacian. We mention another isoperimetric result concerning the first eigenvalue and torsional rigidity related to the Ornstein-Uhlenbeck operator. In more detail, in \cite{HL}, the authors prove that the half-space minimizes the first eigenvalue among sets with given torsional rigidity, that is
        \begin{equation*}
            \lambda_\gamma(\Omega)\geq\lambda_\gamma(H),
        \end{equation*}
        where $H$ is a half-space such that $T_\gamma(\Omega)=T_\gamma(H)$, for every open, bounded subset $\Omega$  of $\mathbb{R}^n$ with Lipschitz boundary. This result represents the Gaussian counterpart of the Kohler-Jobin inequality (see \cite{B} for a complete discussion on the topic).
        
\section{Counterexamples}\label{Counterexample}
\subsection*{Torsional rigidity}
 We start by solving the general $1$-dimensional problem, after which we will exhibit the particular choice of $\Omega_i$. Let $\Omega_i=[a_i,b_i]$, then the torsion problem \eqref{probTor} becomes
        \begin{equation*}
            \begin{cases}
                u''(x)-xu'(x)=-1 & a_i<x<b_i\\
                u(a_i)=u(b_i)=0.
            \end{cases}
        \end{equation*}
        Multiplying in the equation by $e^{-\frac{x^2}{2}}$ and integrating twice we find
        \begin{equation}\label{sol1D}
            u(x)=\int_{a_i}^xe^\frac{t^2}{2}\left(C-\int_{a_i}^t e^{-\frac{s^2}{2}}\,ds \right)\,dt.
        \end{equation}
        From the boundary condition $u(b_i)=0$ the constant is given by
        \begin{equation*}
            C=\frac{\int_{a_i}^{b_i}e^\frac{t^2}{2}\int_{a_i}^te^{-\frac{s^2}{2}}\,ds\,dt}{\int_{a_i}^{b_i}e^\frac{t^2}{2}\,dt}.
        \end{equation*}
        Then, from \eqref{sol1D}, we have
        \begin{equation}\label{Tors1D}
            T_\gamma([a_i,b_i])=\frac{1}{\sqrt{2\pi}}\int_{a_i}^{b_i}u(x)e^{-\frac{x^2}{2}}\,dx=\frac{1}{\sqrt{2\pi}}\int_{a_i}^{b_i}\left[\int_{a_i}^xe^\frac{t^2}{2}\left(C-\int_{a_i}^x e^{-\frac{s^2}{2}}\,ds \right)\,dt \right]e^{-\frac{x^2}{2}}\,dx.
        \end{equation}
        From the statement of Theorem \ref{Teo1}, one may expect that the result also holds true for non-centrally symmetric sets. To highlight the role of central symmetry, we will provide a $1$-dimensional (non-centrally symmetric) example in which $T_\gamma(\Omega_t)>(1-t)T_\gamma(\Omega_0)+tT_\gamma(\Omega_1)$ for some $t\in (0,1)$.
        \begin{esempio}
             Choosing $\Omega_0=[-1,1]$, $\Omega_1=[-2,0]$ and $t=1/2$. Then, from \eqref{Tors1D} we find that 
             $$
             T_\gamma(\Omega_t)-(1-t)T_\gamma(\Omega_0)-tT_\gamma(\Omega_1)\approx0.024229258576.
             $$
        \end{esempio}
        In addition, we will show in the following example that neither log-concavity (which we cannot exclude a priori) nor log-convexity holds in general for centrally symmetric sets:
         \begin{esempio}
           Let $\Omega_0=[-1,1]$, $\Omega_1=[-3,3]$ and $t=3/4$. Then, from \eqref{Tors1D} we find
            \begin{equation*}
                T_\gamma(\Omega_t)-T_\gamma(\Omega_0)^{1-t}T_\gamma(\Omega_1)^t\approx-0.296068463159.
            \end{equation*}
            Moreover, choosing $\Omega_0=[-\frac{1}{2},\frac{1}{2}]$, $\Omega_1=[-1,1]$ and $t=1/2$. Then, from \eqref{Tors1D} we find
            \begin{equation*}
                T_\gamma(\Omega_t)-T_\gamma(\Omega_0)^{1-t}T_\gamma(\Omega_1)^t\approx0.0184327921054.
            \end{equation*}
        \end{esempio}

     We will now show that neither concavity nor convexity of the functional $T_\gamma^\frac{1}{n+2}$ can be proven,  which will be the content of Example \ref{Example-Tor-Exp}. In this regard, we first need to introduce a formula for $T_\gamma(B_R)$; let $u$ be the solution of
        \begin{equation}\label{ProbBall}
            \begin{cases}
                -\mathcal{L}_\gamma u=1 & \text{ in } B_{R},\\
                u=0 & \text{ on } \partial B_{R}.
            \end{cases}
        \end{equation}
        Since the domain in which we are solving the equation is a ball centered at the origin, we know that the solution is a radial function. Using
        \begin{equation*}
            \Delta=\partial_{rr}+\frac{n-1}{r}\partial_r+\frac{1}{r^2}\Delta_{\mathbb{S}^{n-1}},\quad \esc{x,\nabla}=r\partial_r,
        \end{equation*}
        where $\Delta_{\mathbb{S}^{n-1}}$ is the \emph{Laplace-Beltrami operator}, the PDE in \eqref{ProbBall} becomes the following ODE
        \begin{equation*}
            \begin{cases}
                u''(r)+\left(\frac{n-1}{r}-r\right)u'(r)=-1 & 0<r<R\\
                u'(0)=u(R)=0.
            \end{cases}
        \end{equation*}
        Then, we find
        \begin{equation*}
            u(r)=\int_r^R e^\frac{t^2}{2}t^{-(n-1)}\left(\int_0^t s^{n-1}e^{-\frac{s^2}{2}}\,ds\right)\,dt.
        \end{equation*}
        Now, using \eqref{e:TorFormula}, we can see the torsion functional as the following function:
        \begin{equation}\label{T}
            T_\gamma(R):=T_\gamma(B_R)=n\omega_n(2\pi)^{-\frac{n}{2}}\int_0^R\frac{\left(\int_0^t s^{n-1}e^{-\frac{s^2}{2}}\,ds\right)^2}{t^{n-1}e^{-\frac{t^2}{2}}}\,dt.
        \end{equation}
        We need to compute the first and second derivative of \eqref{T}. First, a straightforward computation shows that
        \begin{equation}\label{T'}
            T_\gamma'(R)=n\omega_n(2\pi)^{-\frac{n}{2}}\frac{\left(\int_0^R s^{n-1}e^{-\frac{s^2}{2}}\,ds\right)^2}{R^{n-1}e^{-\frac{R^2}{2}}}.
        \end{equation}
        Thus, differentiating again with respect to $R$, we find
        \begin{equation}\label{T''}
            T_\gamma''(R)=n\omega_n(2\pi)^{-\frac{n}{2}}\int_0^R s^{n-1}e^{-\frac{s^2}{2}}\,ds\left(2-\frac{n-1-R^2}{R^ne^{-\frac{R^2}{2}}}\int_0^Rs^{n-1}e^{-\frac{s^2}{2}}\,ds \right).
        \end{equation}
        
    \begin{esempio}\label{Example-Tor-Exp}
        Let $G_\gamma(R)=T_\gamma(R)^\frac{1}{n+2}$. A straightforward computation gives
        \begin{equation*}
            G_\gamma''(R)=-\frac{n+1}{(n+2)^2}T_\gamma(R)^{-\frac{2n+3}{n+2}}T_\gamma'(R)^2+\frac{1}{n+2}T_\gamma(R)^{-\frac{n+1}{n+2}}T_\gamma''(R).
        \end{equation*}
        Therefore, the statement that $G_\gamma$ is convex is equivalent to prove that $G_\gamma''(R)>0$ and the last, since $T_\gamma(R)>0$ for all $R>0$, is equivalent to prove 
        \begin{equation*}
            T_\gamma''(R)-\frac{n+1}{n+2}\frac{T_\gamma'(R)^2}{T_\gamma(R)}>0.
        \end{equation*}
        Then, from \eqref{T}-\eqref{T'}-\eqref{T''}, the convexity (concavity) of $G_\gamma(R)$ is equivalent to the positivity (negativity) of the following function:
        \begin{equation*}
            f(R)=2-\frac{n-1-R^2}{R^ne^{-\frac{R^2}{2}}}\int_0^R s^{n-1}e^{-\frac{s^2}{2}}\,ds-\frac{n+1}{n+2}\frac{\left(\int_0^R s^{n-1}e^{-\frac{s^2}{2}}\,ds \right)^3}{R^{2n-2}e^{-R^2}}\left(\int_0^R\frac{\left(\int_0^t s^{n-1}e^{-\frac{s^2}{2}}\,ds \right)^2}{t^{n-1}e^{-\frac{t^2}{2}}}\,dt\right)^{-1}.
        \end{equation*}
        Considering, for instance, $n=3$, one has $f(1)\approx-0.019$, while $f(2)\approx 0.248$. This phenomenon is not specific to $n=3$ but seems to occur for every $n\geq 3$. In the appendix, along with numerical evidence for $n=1,2$, plots for $n\geq3$ are included, and it will be possible to see explicitly what has been stated.
    \end{esempio}
\subsection*{First eigenvalue}
    Before giving the counterexample, we need an expression of the eigenvalue on a ball centered at the origin. As in the case of the Laplacian, this will be possible by looking at the first eigenvalue as a zero of a special function. Consider the eigenvalue problem \eqref{Probeigen} where $\Omega=B_R$. We can write the differential operator as follows
    \begin{equation*}
        \Delta-\esc{x,\nabla}=\partial_{rr}+\frac{n-1}{r}\partial_r+\frac{1}{r^2}\Delta_{\mathbb{S}^{n-1}}-r\partial_r.
    \end{equation*}
    Now we expand $u$ in terms of spherical harmonics
    \begin{equation*}
        u(r,\theta)=\sum_{l\geq0}R_l(r)Y_l(\theta),
    \end{equation*}
    where $\Delta_{\mathbb{S}^{n-1}}Y_l(\theta)=-l(l+n-2)Y_l(\theta)$. Then the equation becomes
    \begin{equation}\label{ODEeigen}
        R_l''(r)+\left(\frac{n-1}{r}-r\right)R_l'(r)-\frac{l(l+n-2)}{r^2}R_l(r)=-\lambda_\gamma(\Omega)R_l(r)
    \end{equation}
    for all $l\geq 0$. Introducing the change of variable $r=\sqrt{2s}$, we have
    \begin{equation}\label{chainRule}
        \frac{d}{dr}=\sqrt{2s}\frac{d}{ds},\quad \frac{d^2}{dr^2}=2s\frac{d^2}{ds^2}+\frac{d}{ds}.
    \end{equation}
    Then, if we consider $R_l(r)=s^\frac{l}{2}w(s)$, applying \eqref{chainRule} we find
    \begin{equation*}
        \begin{split}
            R_l'(r)&=\frac{l}{\sqrt{2}}s^\frac{l-1}{2}w(s)+\sqrt{2}s^lw'(s),\\
            R_l''(r)&=\frac{l(l-1)}{2}s^\frac{l-2}{2}w(s)+(2l+1)s^\frac{l}{2}w'(s)+2s^\frac{l+2}{2}w''(s).
        \end{split}
    \end{equation*}
    With this notation, the ODE \eqref{ODEeigen} becomes 
    \begin{equation*}
        sw''(s)+\left(l+\frac{n}{2}-s\right)w'(s)-\frac{l-\lambda_\gamma(\Omega)}{2}w(s)=0,
    \end{equation*}
    which is in the form of Kummer's equation
    \begin{equation*}
        sw''(s)+(b-s)w'(s)-aw(s)=0
    \end{equation*}
    with $b=l+\frac{n}{2}$ and $a=\frac{l-\lambda_\gamma(\Omega)}{2}$. A solution $w(s)$ will be a linear combination of the following functions
    \begin{itemize}
        \item \emph{Kummer confluent hypergeometric function}: $M\left(\frac{l-\lambda_\gamma(\Omega)}{2},\, l+\frac{n}{2};\, s\right)=\,_1F_1\left(\frac{l-\lambda_\gamma(\Omega)}{2};\, l+\frac{n}{2};\, s\right)$,
        \item \emph{Tricomi confluent hypergeometric function}: $U\left(\frac{l-\lambda_\gamma(\Omega)}{2},\, l+\frac{n}{2};\, s\right)$,
    \end{itemize}
    see \cite[Chapter 13]{AS} for a more comprehensive discussion. Since we are interested in a solution that is regular at $r=0$, we have
    \begin{equation*}
        R_l(r)=r^l\,_1F_1\left(\frac{l-\lambda_\gamma(\Omega)}{2};\, l+\frac{n}{2};\, \frac{r^2}{2}\right)
    \end{equation*}
    and from the boundary condition $R_l(R)=0$ we get
    \begin{equation*}
        _1F_1\left(\frac{l-\lambda_\gamma(\Omega)}{2};\, l+\frac{n}{2};\, \frac{R^2}{2}\right)=0.
    \end{equation*}
    Thus, the first eigenvalue is given by $R_0(R)=0$, that is
    \begin{equation}\label{FirstEigen}
        _1F_1\left(-\frac{\lambda_\gamma(\Omega)}{2};\, \frac{n}{2};\, \frac{R^2}{2}\right)=0.
    \end{equation}
    We are now ready for the counterexample.
    \begin{esempio}
        Consider $R_0=4$, $R_1=6$, $n=2$, $t=1/2$. Then $R_t=(1-t)R_0+tR_1=5$. Using \eqref{FirstEigen}, we can approximate numerically the following eigenvalues:
        \begin{equation}
            \begin{split}
                \lambda_\gamma(B_{R_0})&\approx 0.0045931899,\\
                \lambda_\gamma(B_{R_1})&\approx 0.0000005157,\\
                \lambda_\gamma(B_{R_t})&\approx0.0000848928.
            \end{split}
        \end{equation}
        Then,
        \begin{equation*}
            \lambda_\gamma(B_{R_t})^{-\frac{1}{2}}-\frac{1}{2}\lambda_\gamma(B_{R_0})^{-\frac{1}{2}}-\frac{1}{2}\lambda_\gamma(B_{R_1})^{-\frac{1}{2}}\approx-595.
        \end{equation*}
    \end{esempio}

\section{Main result}\label{Proof of the results}
The present section deals with the proof of Theorem \ref{Teo1}. To make the explanation clearer, we will now present an outline of the proof, which we will formalize in the course of this section.

We want to prove that the function $f(R)=T_\gamma(R)^\alpha$ is convex for some $\alpha\in(0,1)$, that is, to prove that $f''(R)>0$ for all $R>0$, and that the best possible choice of $\alpha$, independent of the dimension, is $\alpha=1/3$. A straightforward computation shows that $f''(R)>0$ is equivalent to
\begin{equation*}
    \alpha\geq 1-\frac{T_\gamma(R)T_\gamma''(R)}{T_\gamma'(R)^2}=:\alpha_n(R).
\end{equation*}
The claim is then
\begin{equation*}
    \sup_{n\geq1}\sup_{R>0}\alpha_n(R)=\frac{1}{3}.
\end{equation*}
To do this, we will get the following information about $\alpha_n(R)$:
\begin{itemize}
    \item [(i)] $\lim_{R\rightarrow0^+}\alpha_n(R)=\frac{1}{n+2}\leq\frac{1}{3}$;
    \item [(ii)] $\lim_{R\rightarrow+\infty}\alpha_n(R)=0$;
    \item[(iii)] whenever 
    \begin{equation*}
        \frac{d}{dR}\alpha_n(R)\bigg|_{R=R^*}=0,
    \end{equation*}
    for some $R^*>0$, then $\alpha_n(R^*)\leq1/3$.
\end{itemize}
In more detail, (i) implies that there exists $\delta > 0$ such that $\alpha_n(R)<1/3$ for all $R\in(0,\delta)$. In a similar manner, (ii) implies the existence of $M >0$ such that $\alpha_n(R)<1/3$ for all $R>M$.

Once we know this, if for some $R_0>0$ and $n\geq1$ we have $\alpha_n(R_0)>1/3$, then from what we have said it must be $R_0\in[\delta ,M]$. Then, from the Weierstrass theorem, there exists $R^*\in[\delta,M]$ such that
\begin{equation*}
    \alpha_n(R^*)\geq \alpha_n(R_0)>\frac{1}{3},\quad \frac{d}{dR}\alpha_n(R)\bigg|_{R=R^*}=0,
\end{equation*}
which is a contradiction to (iii). 

The proof of Theorem \ref{Teo1} is organized as follows. We first prove points (i)-(ii)-(iii), which will be the content of the next four lemmas. The conclusion is then achieved in the proof of Theorem \ref{Teo1}. We start by introducing the following notation:
\begin{equation}\label{newdef}
    \begin{split}
        &c_n=n\omega_n(2\pi)^{-\frac{n}{2}},\quad I(R)=\int_0^R s^{n-1}e^{-\frac{s^2}{2}}\,ds,\quad J(R)=\int_0^R\frac{I(t)^2}{I'(t)}\,dt,\\
        &h(R)=\frac{I(R)}{I'(R)},\quad v(R)=\frac{J(R)}{h(R)I(R)},\quad p(R)=\frac{n-1}{R}-R.
    \end{split}
\end{equation}
\begin{lemma}\label{Lemma1}
    Let $v$ be defined as in \eqref{newdef}. Then
    \begin{equation*}
        \lim_{R\rightarrow 0^+}v'(R)=\frac{1}{n+2}.
    \end{equation*}
\end{lemma}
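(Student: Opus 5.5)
We compute $v'(R)$ explicitly and then expand each piece in powers of $R$ near $0$. From the definitions in \eqref{newdef} we have $I'(R)=R^{n-1}e^{-R^2/2}$, $J'(R)=I(R)^2/I'(R)=h(R)I(R)$, and $v=J/(hI)$. Differentiating the quotient,
\begin{equation*}
    v'(R)=\frac{J'(R)}{h(R)I(R)}-\frac{J(R)\,\bigl(h(R)I(R)\bigr)'}{\bigl(h(R)I(R)\bigr)^2}=1-v(R)\,\frac{\bigl(h(R)I(R)\bigr)'}{h(R)I(R)}.
\end{equation*}
Since $hI=I^2/I'$ and $I''/I'=p$, this becomes
\begin{equation*}
    v'(R)=1-v(R)\left(\frac{2}{h(R)}-p(R)\right).
\end{equation*}
The limit is therefore reduced to the behaviour of $v$, $h$ and $p$ as $R\to0^+$.

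Next we take the asymptotics. We have $I(R)=\frac{R^n}{n}\bigl(1+O(R^2)\bigr)$ and $I'(R)=R^{n-1}\bigl(1+O(R^2)\bigr)$, so $h(R)=\frac{R}{n}\bigl(1+O(R^2)\bigr)$. Then $I^2/I'=\frac{t^{n+1}}{n^2}\bigl(1+O(t^2)\bigr)$, which gives $J(R)=\frac{R^{n+2}}{n^2(n+2)}\bigl(1+O(R^2)\bigr)$. Since $h(R)I(R)=\frac{R^{n+1}}{n^2}\bigl(1+O(R^2)\bigr)$, we get
\begin{equation*}
    v(R)=\frac{R}{n+2}\bigl(1+O(R^2)\bigr).
\end{equation*}

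Finally we combine. The bracket is $\frac{2}{h}-p=\frac{2n}{R}\bigl(1+O(R^2)\bigr)-\frac{n-1}{R}+R=\frac{n+1}{R}+O(R)$. Hence
\begin{equation*}
    v\left(\frac{2}{h}-p\right)=\frac{n+1}{n+2}+O(R^2),
\end{equation*}
and so $v'(R)\to 1-\frac{n+1}{n+2}=\frac{1}{n+2}$.

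The only delicate point is the $1/R$ singularity in $\frac{2}{h}-p$. It must be matched against $v\sim R/(n+2)$, which requires only the leading-order terms; the $O(R^2)$ corrections contribute $O(R^2)$ to the product. Alternatively, one could apply L'Hôpital's rule to $v=J/(hI)$ to obtain the leading order of $v$. The expansion approach avoids differentiating twice.
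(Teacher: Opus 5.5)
Your proof is correct, and its core is the same as the paper's: the same small-$R$ expansions of $I$, $I'$, $h$, $hI$ and $J$, leading to $v(R)=\frac{R}{n+2}\bigl(1+O(R^2)\bigr)$. The difference is in the last step.

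The paper stops at $v(R)=\frac{R}{n+2}+O(R^3)$ and says this gives the claim. That tacitly differentiates the remainder, but an $O(R^3)$ bound alone says nothing about the derivative of the remainder. The step is only justified by extra structure the paper does not mention: $I$, $I'$ and $J$ are monomials times power series in $R^2$, so $v(R)=R\,g(R^2)$ with $g$ analytic near $0$.

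You instead use the exact identity $v'=1-v\,(2/h-p)$, which the paper derives only later, in the proofs of Lemmas~\ref{Lemma2} and~\ref{Lemma4}. With it, only the asymptotics of undifferentiated quantities are needed, and the cancellation of the $\frac{n+1}{R}$ singularity against $v\sim\frac{R}{n+2}$ becomes explicit. So your version is rigorous at exactly the point where the paper's argument is quick, at essentially no extra cost.
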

\begin{proof}
    The idea is to write $v(R)$ for $R\sim0$. Recalling the definition of $v$ from \eqref{newdef}, we start by expanding $e^{-\frac{s^2}{2}}$ under the integral sign
    \begin{equation*}
        I(R)=\int_0^R s^{n-1}\left(1-\frac{s^2}{2}+O(s^4)\right)\,ds=\frac{R^n}{n}-\frac{R^{n+2}}{2(n+2)}+O(R^{n+4}),
    \end{equation*}
    and
    \begin{equation*}
        I'(R)=R^{n-1}\left(1-\frac{R^2}{2}+O(R^4)\right).
    \end{equation*}
    From these, we can write
    \begin{equation*}
        h(R)=\frac{I(R)}{I'(R)}=\frac{R}{n}+O(R^3).
    \end{equation*}
    Hence,
    \begin{equation}\label{exphI}
        h(R)I(R)=\frac{R^{n+1}}{n^2}+O(R^{n+3}).
    \end{equation}
    
    Regarding $J(R)$, since the integrand satisfies
    \begin{equation*}
        \frac{I(t)^2}{I'(t)}=\frac{t^{n+1}}{n^2}+O(t^{n+3}),
    \end{equation*}
    we get
    \begin{equation}\label{expJ}
      J(R)=\frac{R^{n+2}}{n^2(n+2)}+O(R^{n+4}).  
    \end{equation}
    Then, combining \eqref{exphI}-\eqref{expJ}, we find
    \begin{equation*}
        v(R)=\frac{J(R)}{h(R)I(R)}=\frac{R}{n+2}+O(R^3).
    \end{equation*}
    The latter gives the claim.
\end{proof}
\begin{lemma}\label{Lemma2}
    Let $v$ be defined as in \eqref{newdef}. Then
    \begin{equation*}
        \lim_{R\rightarrow +\infty}v'(R)=0.
    \end{equation*}
\end{lemma}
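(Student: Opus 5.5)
To prove Lemma \ref{Lemma2}, I will compute $v'$ explicitly and then study its behaviour as $R\to+\infty$ using the asymptotics of $I$, $I'$, $h$ and $J$.

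\textbf{Step 1: a formula for $v'$.} The definitions in \eqref{newdef} give $J'=I^2/I' = hI$. Therefore
\begin{equation*}
    v' = \frac{J'}{hI} - \frac{J\,(hI)'}{(hI)^2} = 1 - v\,\frac{(hI)'}{hI}.
\end{equation*}
Next, $I''/I' = p$, so $h' = 1 - hp$. This yields
\begin{equation*}
    \frac{(hI)'}{hI} = \frac{h'}{h} + \frac{I'}{I} = \frac{1-hp}{h} + \frac{1}{h} = \frac{2}{h} - p.
\end{equation*}
Hence
\begin{equation*}
    v'(R) = 1 - v(R)\left(\frac{2}{h(R)} - p(R)\right),
\end{equation*}
and it suffices to show that $v\,(2/h - p)\to 1$.

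\textbf{Step 2: asymptotics of $h$.} As $R\to\infty$, $I(R)\to I(\infty)=2^{n/2-1}\Gamma(n/2)\in(0,\infty)$. Meanwhile $I'(R)=R^{n-1}e^{-R^2/2}$ decays superexponentially. It follows that $h\sim I(\infty)R^{1-n}e^{R^2/2}\to\infty$, so $2/h\to0$ extremely fast. Since $p(R)= -R+O(1/R)$, we get
\begin{equation*}
    \frac{2}{h}-p = R + O(1/R).
\end{equation*}

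\textbf{Step 3: asymptotics of $v$ (main obstacle).} The goal is $v(R) = R^{-1}+o(R^{-1})$, i.e. $J \sim hI/R$. I will use L'Hôpital's rule on $J/(hI/R)$; both numerator and denominator diverge. The derivative of the numerator is $J'=hI$. For the denominator, Step 1 gives
\begin{equation*}
    \left(\frac{hI}{R}\right)' = \frac{hI}{R}\left(\frac{2}{h}-p-\frac1R\right) = hI\,\bigl(1+O(R^{-2})\bigr).
\end{equation*}
The ratio of derivatives therefore tends to $1$, so $J\sim hI/R$ and $v\sim 1/R$. This gives only $v(2/h-p)\to1$ to leading order, but that is exactly what is needed. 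Writing $v=R^{-1}(1+\varepsilon(R))$ with $\varepsilon\to0$, we obtain
\begin{equation*}
    v\left(\frac2h-p\right) = (1+\varepsilon)\bigl(1+O(R^{-2})\bigr)\to1.
\end{equation*}
Consequently $v'\to 0$.

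The delicate point is keeping track of the error terms so that the product tends exactly to $1$. L'Hôpital's rule handles this cleanly because the ratio of derivatives converges to $1$ outright, not merely up to a bounded factor. Alternatively, one can integrate by parts in $J$, writing $hI = I^2 e^{t^2/2}t^{1-n}$, to obtain $J = \frac{hI}{R}(1+O(R^{-2}))$ explicitly.
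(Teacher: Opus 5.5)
Your proof is correct, but it obtains the key asymptotic $\varphi(R):=Rv(R)\to1$ by a different route than the paper.

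\textbf{What is shared.} Both arguments rest on the same identity $v'=1-v\,(1+h')/h=1-v\,(2/h-p)$. Both also use the fact that $\frac{1}{R}\bigl(\frac{2}{h}-p\bigr)=1-\frac{n-1}{R^2}+\frac{2}{Rh}\to1$.

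\textbf{The paper's route.} The paper writes the first-order equation $\varphi'=R(1-\varphi)+E(R)\varphi$, with $E=\frac{n}{R}-\frac{2}{h}\to0$, and runs a barrier argument in three steps. It first shows that $\varphi$ is bounded. It then shows that, for large $R$, $\varphi$ cannot leave the band $[1-\delta,1+\delta]$ once it is inside. Finally it shows that $\varphi$ must enter the band, since otherwise integrating the differential inequality would force $\varphi\to\pm\infty$.

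\textbf{Your route.} You observe that $\varphi=J/(hI/R)$ is an $\infty/\infty$ quotient with explicit derivatives: $J'=hI$ and $(hI/R)'=hI\bigl(1-\frac{n}{R^2}+\frac{2}{Rh}\bigr)$. The latter is nonzero for $R>\sqrt n$, and its ratio to $J'$ tends to $1$. L'H\^opital's rule then gives $\varphi\to1$ at once.

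\textbf{Comparison.} The two routes encode the same information: the paper's equation for $\varphi$ is exactly the derivative identity you feed into L'H\^opital. Yours replaces a page of comparison arguments by one line. The barrier method would still work if one had only differential inequalities rather than an exact formula for the derivative of the denominator, but that robustness is not needed here. Conversely, your integration-by-parts remark, if carried out, would give the rate $\varphi=1+O(R^{-2})$, which the paper's argument does not provide.
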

\begin{proof}
   First, note that
    \begin{equation*}
        \begin{split}
            &I(R)\rightarrow c_n=\int_0^{+\infty}s^{n-1}e^{-\frac{s^2}{2}}\,ds,\\
            &I'(R)=R^{n-1}e^{-\frac{R^2}{2}}\rightarrow0,
        \end{split}
    \end{equation*}
    as $R\rightarrow+\infty$. Hence, $h(R)=I(R)/I'(R)\rightarrow+\infty$ and $h(R)\sim c_nR^{1-n}e^\frac{R^2}{2}$. Now, recalling that
    \begin{equation*}
         p(R)=\frac{n-1}{R}-R=\frac{I''(R)}{I'(R)},
    \end{equation*}
    we find
    \begin{equation}\label{h'(R)}
        h'(R)=1-\frac{I(R)I''(R)}{I'(R)^2}=1-\frac{I(R)p(R)}{I'(R)}=1-h(R)p(R).
    \end{equation}
    Thus, using \eqref{h'(R)} and the definition of $p(R)$, we get that
    \begin{equation}\label{limit1}
        \frac{1+h'(R)}{Rh(R)}=\frac{2-h(R)p(R)}{Rh(R)}=1+\frac{2}{Rh(R)}-\frac{n-1}{R^2}\rightarrow 1,
    \end{equation}
    as $R\rightarrow+\infty$.
    
    Define now $\varphi(R)=Rv(R)$. Starting from the definition of $v(R)$, a straightforward computation gives
    \begin{equation*}
        v'(R)=1-\frac{2v(R)}{h(R)}+p(R)v(R),
    \end{equation*}
    then
    \begin{equation*}
        \varphi'(R)=R+v(R)\left(1+Rp(R)-\frac{2R}{h(R)}\right).
    \end{equation*}
    Moreover, we find, from the definition of $p(R)$, that 
    \begin{equation*}
        1+Rp(R)-\frac{2R}{h(R)}=n-R^2-\frac{2R}{h(R)},
    \end{equation*}
    which, together with $v(R)=\varphi(R)/R$, gives
    \begin{equation*}
        \varphi'(R)=R-R\varphi(R)+\varphi(R)\left(\frac{n}{R}-\frac{2}{h(R)} \right).
    \end{equation*}
    Finally, we get
    \begin{equation}\label{eq(2)}
        \varphi'(R)=R(1-\varphi(R))+E(R)\varphi(R),
    \end{equation}
    where
    \begin{equation*}
        E(R)=\frac{n}{R}-\frac{2}{h(R)},
    \end{equation*}
    and $E(R)\rightarrow0$ as $R\rightarrow+\infty.$
    
    We will now prove that $\varphi$ is bounded. In this regard, we first observe that, since $v(R)>0$ for all $R>0$, then $\varphi(R)>0$. Furthermore, since $E(R)\rightarrow0$, consider $R_0>0$ such that, for instance, $|E(R)|\leq1/2$ for all $R\geq R_0$. Now, if for some $R_1\geq \max\{R_0,\,2\}$, $\varphi(R_1)=2$, then from \eqref{eq(2)} and the boundedness of $E(R)$ we have
    \begin{equation*}
        \varphi'(R_1)=-R_1+2E(R_1)\leq1-R_1<0.
    \end{equation*}
    Therefore, $\varphi$ is pushed below the value $\varphi(R_1)=2$, and we can conclude that $0<\varphi(R)\leq M$, for all $R>0$, where 
    \begin{equation*}
        M=\max\left\{2,\sup_{R\in(0,R_0]}\varphi(R) \right\}.
    \end{equation*}
    
    We will finish the proof by showing that $\varphi(R)\rightarrow 1$ as $R\rightarrow+\infty$, which together with \eqref{limit1} will imply the thesis.
    In this regard, fix $\delta>0$ and consider $R_1>1+M$ large enough such that
    \begin{equation}\label{eq(4)}
        |E(R)|\leq\delta
    \end{equation}
   for all $R\geq R_1$. We first prove that $\varphi$ cannot cross $1+\delta$ from below. To do this, assume by contradiction that it is false, namely that there exists $R_2\geq R_1$ such that
    \begin{equation*}
        \varphi(R_2)=1+\delta\quad\text{ and }\quad \varphi'(R_2)\geq0.
    \end{equation*}
    Then, from \eqref{eq(2)}-\eqref{eq(4)}, we find
    \begin{equation*}
        \varphi'(R_2)=-\delta R_2+E(R_2)(1+\delta)\leq \delta(1+M-R_2)<0,
    \end{equation*}
    where we have used $\delta<M$ (which we can always assume, provided that we choose $\delta$ sufficiently small) and $R_2\geq R_1>1+M$. The latter gives $0\leq \varphi'(R_2)<0$, that is a contradiction. Arguing in the same way, it is possible to show that $\varphi$ cannot cross $1-\delta$ from above, that is, there does not exist $R_2\geq R_1$ such that $\varphi(R_2)=1-\delta$ and $\varphi'(R_2)\leq0$.
    
    To summarize, what we have shown so far is that, if for a certain $\overline{R}\geq R_1$, we have $1-\delta\leq\varphi(\overline{R})\leq1+\delta$, then $1-\delta\leq\varphi(R)\leq1+\delta$ for all $R\geq \overline{R}$. Assume now that $\varphi(R)>1+\delta$ for all $R\geq R_1$. Then, from \eqref{eq(2)}-\eqref{eq(4)}, we have
    \begin{equation*}
        \varphi'(R)=R(1-\varphi(R))+E(R)\varphi(R)\leq -\delta R+\delta\varphi(R)\leq -\delta R+\delta(1+M)<0,
    \end{equation*}
    where we have used again $R\geq R_1>1+M$ and $\varphi(R)\leq M<1+M$. In particular, if we integrate in the previous inequality on $[R_1,R]$, we find
    \begin{equation*}
        \varphi(R)\leq \varphi(R_1)+(R-R_1)\left(\delta(1+M)-\frac{\delta}{2}(R+R_1)\right)\rightarrow-\infty
    \end{equation*}
    as $R\rightarrow+\infty$. This is a contradiction since $\varphi(R)>0$. This ensures that there exists $\overline{R}_1\geq R_1$ such that $\varphi(\overline{R}_1)\leq 1+\delta$ and, from what we have proved before, $\varphi(R)\leq 1+\delta$ for all $R\geq\overline{R}_1$.
    Assume now $\varphi(R)<1-\delta$ for all $R\geq R_1$. Hence, from \eqref{eq(2)}, we find
    \begin{equation*}
        \begin{split}
            \varphi'(R)&=R(1-\varphi(R))+E(R)\varphi(R)>\delta R+E(R)\varphi(R)>\delta R-\delta\varphi(R)\\
            &>\delta R-\delta(1-\delta).
        \end{split} 
    \end{equation*}
    Integrating again on $[R_1,R]$ the previous inequality, we find
    \begin{equation*}
        \varphi(R)\geq \varphi(R_1)+(R-R_1)\left[ \frac{\delta}{2}(R+R_1)-\delta(1-\delta)\right]\rightarrow+\infty,
    \end{equation*}
    which contradicts the fact $\varphi(R)\leq M$. Thus, there exists $\overline{R}_2\geq R_1$ such that $\varphi(\overline{R}_2)\geq 1-\delta$ and, from what we have proved, $\varphi(R)\geq1-\delta$ for all $R\geq \overline{R}_2$. Finally, we have found that, for all $R\geq\max\{\overline{R}_1,\,\overline{R}_2\}$, it holds
    \begin{equation*}
        1-\delta\leq \varphi(R)\leq 1+\delta.
    \end{equation*}
    
    Since $\delta$ was arbitrary, we can conclude that $\varphi(R)\rightarrow1$ as $R\rightarrow+\infty$. To finish the proof, observe that the latter and \eqref{limit1} imply that
    \begin{equation*}
        v'(R)=1-\frac{1+h'(R)}{h(R)}v(R)=1-\frac{1+h'(R)}{Rh(R)}\varphi(R)\rightarrow1-1=0
    \end{equation*}
    as $R\rightarrow+\infty$.
\end{proof}
\begin{lemma}\label{Lemma3}
    Let $h$ be defined as in \eqref{newdef}, then $h'(R)>0$. 
\end{lemma}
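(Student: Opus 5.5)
Since $I'(R)=R^{n-1}e^{-\frac{R^2}{2}}>0$ for $R>0$, I would start from the identity \eqref{h'(R)} already derived in the proof of Lemma \ref{Lemma2},
\begin{equation*}
    h'(R)=1-h(R)p(R)=1-\frac{I(R)\,p(R)}{I'(R)},\qquad p(R)=\frac{I''(R)}{I'(R)}=\frac{n-1}{R}-R,
\end{equation*}
and split according to the sign of $p$. Since $I(R)>0$ and $I'(R)>0$ for $R>0$, whenever $p(R)\le 0$, i.e.\ $R\ge\sqrt{n-1}$, we immediately get $h'(R)\ge1>0$. This covers every $R>0$ when $n=1$, so from now on assume $n\geq2$ and $R\in(0,\sqrt{n-1})$, where $p(R)>0$.

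On this interval the claim $h'(R)>0$ is equivalent to
\begin{equation*}
    I(R)<\frac{I'(R)}{p(R)}=\frac{R^{n}e^{-\frac{R^2}{2}}}{n-1-R^2}.
\end{equation*}
I would therefore introduce the auxiliary function
\begin{equation*}
    g(R)=\frac{I'(R)}{p(R)}-I(R),\qquad R\in(0,\sqrt{n-1}),
\end{equation*}
and show that $g>0$. The approach is to check that $g$ vanishes at $R=0^+$ and is increasing.

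For the boundary value, I would use $I(R)=O(R^n)$ and $I'(R)/p(R)=R^ne^{-R^2/2}/(n-1-R^2)=O(R^n)$, so $g(0^+)=0$. In fact $g(R)\sim R^n\big(\frac{1}{n-1}-\frac1n\big)>0$ near $0$. For the monotonicity, I would differentiate and use $I''=I'p$:
\begin{equation*}
    g'(R)=\frac{I''p-I'p'}{p^2}-I'=\frac{I'p^2-I'p'}{p^2}-I'=-\frac{I'(R)\,p'(R)}{p(R)^2}.
\end{equation*}
Since $p'(R)=-\frac{n-1}{R^2}-1<0$ and $I'>0$, this gives $g'>0$ on $(0,\sqrt{n-1})$. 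Hence $g(R)>g(0^+)=0$, which yields $h'(R)>0$ there as well.

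The only delicate point is spotting the auxiliary function $g$, which is suggested by dividing the target inequality by $p$; after that, the cancellation $I''=I'p$ makes the derivative computation essentially one line. Near $R=\sqrt{n-1}$ the function $g$ blows up to $+\infty$, which is harmless because we only need $g>0$ on the open interval, and the endpoint $R=\sqrt{n-1}$ itself is covered by the first case.
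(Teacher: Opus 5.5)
Your proof is correct, and your case split ($p\le 0$, which covers $n=1$ and $R\ge\sqrt{n-1}$, versus $n\ge2$ with $0<R<\sqrt{n-1}$) is exactly the paper's. Where you differ is the last case.

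The paper argues by contradiction there. The expansion $h(R)=R/n+O(R^3)$ gives $h'(0^+)=1/n>0$. At a first zero $R_0$ of $h'$ one would then need $h''(R_0)\le 0$. But differentiating $h'=1-hp$ yields $h''(R_0)=-h'(R_0)p(R_0)-h(R_0)p'(R_0)=-h(R_0)p'(R_0)>0$.

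You instead write down an explicit strictly increasing function $g=I'/p-I$ with $g(0^+)=0$. This gives a direct argument, with no first-zero selection and an explicit integral expression for $h'$, at the price of having to find $g$. Both proofs rest on the same fact, $p'<0$.

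Note that $g=I'h'/p$, and the division by $p$ is the only thing confining you to $(0,\sqrt{n-1})$. The function $I'h'=I'-Ip$ satisfies $(I'-Ip)'=-Ip'>0$ on all of $(0,+\infty)$. This is the paper's identity $h''+ph'=-hp'$ multiplied by the integrating factor $I'$, since $(\log I')'=p$. As $R\to0^+$, $I'h'$ tends to $0$ for $n\ge2$ and to $1$ for $n=1$. So it is positive everywhere, and the case distinction disappears altogether.
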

\begin{proof}
    We first recall that, from \eqref{h'(R)}, we have $h'(R)=1-h(R)p(R)$. We distinguish three cases.
    \begin{enumerate}[label=\textbf{\underline{Case \arabic*:}}, wide]
    \item $n=1$. In this case we have $p(R)=-R<0$, while $h(R)>0$ for all $R>0$, then $h'(R)=1-h(R)p(R)>0$ for all $R>0$.

    \item $n\geq2$ and $R\geq\sqrt{n-1}$. In this case we have
    \begin{equation*}
        p(R)=\frac{n-1}{R}-R\leq\sqrt{n-1}-R\leq0,
    \end{equation*}
    while $h(R)>0$ for all $R>0$, then $h'(R)=1-h(R)p(R)>0$ for all $R>0$.

    \item $n\geq2$ and $0<R<\sqrt{n-1}$. Now we have $p(R),\,h(R)>0$ for all $R>0$.  We first remark that, looking at the expansion of $h(R)$, for $R\sim0$, in the proof of the previous lemma, we have
    \begin{equation*}
        \lim_{R\rightarrow0^+}h'(R)=\lim_{R\rightarrow0^+}(1-h(R)p(R))=\lim_{R\rightarrow0^+}\left(1-\left(\frac{n-1}{R}-R\right)\frac{R}{n}\right)=1-\frac{n-1}{n}>0.
    \end{equation*}
    Assume that there exists $R_0\in(0,\sqrt{n-1})$ such that
    \begin{equation*}
        h'(R)>0\quad\forall\, R\in(0,R_0) \quad\text{ and }\quad h'(R_0)=0.
    \end{equation*}
    Thus $h''(R_0)\leq0$, and the latter gives us
    \begin{equation*}
        0\geq h''(R_0)=-h'(R_0)p(R_0)-h(R_0)p'(R_0)=-h(R_0)p'(R_0),
    \end{equation*}
    where in the last equality we have used $h'(R_0)=0$. Then, we find $h(R_0)p'(R_0)\geq0$, that is a contradiction since 
    \begin{equation}\label{p'}
        p'(R)=-\frac{n-1}{R^2}-1<0,
    \end{equation}
    and $h(R)>0$.
\end{enumerate}
   
\end{proof}
\begin{lemma}\label{Lemma4}
    Assume that for some $R^*>0$, it holds $v''(R^*)=0$, then 
    \begin{equation*}
        v'(R^*)=\frac{2h'(R^*)+p'(R^*)h(R^*)^2}{(1+h'(R^*))^2+2h'(R^*)+p'(R^*)h(R^*)^2}\leq\frac{1}{3}.
    \end{equation*}
\end{lemma}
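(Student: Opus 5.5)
The plan is to get an explicit expression for $v'$ at any critical point of $v'$ by differentiating a first-order identity for $v$. The inequality should then reduce to an elementary sign check, using $p'<0$ from \eqref{p'} and $h'>0$ from Lemma \ref{Lemma3}.

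\textbf{Step 1: a first-order identity for $v$.} In the proof of Lemma \ref{Lemma2} we have $v'=1-\frac{2v}{h}+pv$. By \eqref{h'(R)}, $p=(1-h')/h$, and substituting gives
\begin{equation*}
    v'(R)=1-\frac{1+h'(R)}{h(R)}\,v(R).
\end{equation*}
Since $v,h>0$ and $h'>0$ (Lemma \ref{Lemma3}), this immediately gives $v'(R)<1$ for every $R>0$. This will be used to control a sign at the end.

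\textbf{Step 2: differentiate and impose $v''(R^*)=0$.} Differentiating the identity,
\begin{equation*}
    v''=-\Big(\frac{1+h'}{h}\Big)'v-\frac{1+h'}{h}v'.
\end{equation*}
Next compute $h''=-h'p-hp'$ from \eqref{h'(R)}, and eliminate $p=(1-h')/h$ again. This should give
\begin{equation*}
    \Big(\frac{1+h'}{h}\Big)'=-\frac{2h'+p'h^2}{h^2}.
\end{equation*}
At $R^*$, set $v''=0$ and replace $v$ by $v=(1-v')h/(1+h')$ from Step 1. With $A:=2h'+p'h^2$ and $D:=(1+h')^2+A$, all evaluated at $R^*$, this yields the linear relation
\begin{equation*}
    A(1-v')=(1+h')^2v', \qquad\text{that is,}\qquad A=D\,v'.
\end{equation*}
We must have $D\neq0$: otherwise $A=0$, and then $(1+h')^2=0$, which contradicts $h'>0$. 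Dividing by $D$ gives the formula in the statement.

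\textbf{Step 3: the bound $v'\le 1/3$.} The subtle point is the sign of $D$, because $p'h^2<0$ can be large. I would handle it with Step 1 rather than by estimating $h$.

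If $D<0$, then $A<-(1+h')^2<0$. Hence $v'=A/D=|A|/(|A|-(1+h')^2)>1$, contradicting $v'<1$. So $D>0$.

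With $D>0$, the inequality $v'\le 1/3$ is equivalent to $3A\le D$, i.e. $2A\le(1+h')^2$. Rearranging, this is
\begin{equation*}
    (1+h')^2-4h'-2p'h^2=(1-h')^2-2p'(R^*)h(R^*)^2\ge0.
\end{equation*}
This holds, in fact strictly, because $p'<0$ by \eqref{p'} and $h>0$.

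The only real obstacle is the algebra of Step 2, in particular getting $(\frac{1+h'}{h})'$ into the clean form with $A$, together with ruling out $D\le 0$; Step 1 handles the latter.
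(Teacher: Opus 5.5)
Your proof is correct and follows essentially the same route as the paper. Both differentiate $v'=1-\frac{1+h'}{h}\,v$, impose $v''(R^*)=0$, eliminate $v$ to get $v'(R^*)=A/(A+(1+h')^2)$, show the denominator is positive using positivity of $v/h$ and $1+h'$ (your $v'<1$ is the same fact as the paper's $w>0$), and reduce $v'\le 1/3$ to $(1-h')^2-2p'h^2\ge 0$. The differences are cosmetic: you compute $\bigl(\frac{1+h'}{h}\bigr)'=-A/h^2$ directly, while the paper expands $v''$ in terms of $p$ and substitutes $ph=1-h'$ afterwards.
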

\begin{proof}
    We start by recalling that 
    \begin{equation*}
        v(R)=\frac{J(R)}{h(R)I(R)}.
    \end{equation*}
    Now, differentiating, since $1+h'(R)=2-p(R)h(R)$, we find
    \begin{equation}\label{v'}
        v'(R)=\frac{J'(R)}{h(R)I(R)}-\frac{J(R)(h(R)I(R))'}{h(R)^2I(R)^2}=1-\frac{(1+h'(R))v(R)}{h(R)}=1-\frac{2v(R)}{h(R)}+p(R)v(R).
    \end{equation}
    Differentiating again in \eqref{v'} we have
    \begin{equation}\label{v''}
        v''(R)=v'(R)\left(p(R)-\frac{2}{h(R)}\right)+v(R)\left(\frac{2h'(R)}{h(R)^2}+p'(R)\right).
    \end{equation}
    Assume that there exists $R^*>0$ such that $v''(R^*)=0$. Taking into account that $p(R)h(R)=1-h'(R)$, we get from \eqref{v''} that
    \begin{equation}\label{v'(R^*)}
        v'(R^*)=\frac{v(R^*)\left(\frac{2h'(R^*)}{h(R^*)}+p'(R^*)h(R^*)\right)}{1+h'(R^*)}.
    \end{equation}
    Setting
    \begin{equation*}
        w(R)=\frac{v(R)}{h(R)},\quad A(R)=2h'(R)+p'(R)h(R)^2,\quad s(R)=1+h'(R),
    \end{equation*}
    \eqref{v'(R^*)} becomes 
    \begin{equation*}
        v'(R^*)=\frac{w(R^*)A(R^*)}{s(R^*)}.
    \end{equation*}
    Moreover, from \eqref{v'}, we have $v'(R)=1-s(R)w(R)$. Combining the previous two identities we get
    \begin{equation}\label{w(R^*)}
        w(R^*)=\frac{s(R^*)}{A(R^*)+s(R^*)^2}.
    \end{equation}
    Therefore
    \begin{equation}\label{V'(R^*)2}
        v'(R^*)=1-s(R^*)w(R^*)=\frac{A(R^*)}{A(R^*)+s(R^*)^2}.
    \end{equation}
    By Lemma \ref{Lemma3}, we have $s(R)=1+h'(R)>0$, and $w(R)>0$. Since \eqref{w(R^*)} gives $s(R^*)=w(R^*)(s(R^*)^2+A(R^*))$, then it must be $s(R^*)^2+A(R^*)>0$ and \eqref{V'(R^*)2} is well defined. From \eqref{V'(R^*)2} follows that $v'(R^*)\leq 1/3$ if and only if $2A(R^*)\leq s(R^*)^2$. By the definition of $A(R)$ and $s(R)$, using \eqref{p'}, we find
    \begin{align*}
        s(R^*)^2-2A(R^*)&=(1+h'(R^*))^2-2(2h'(R^*)+p'(R^*)h(R^*)^2)\\
        &=(h'(R^*)-1)^2-2p'(R^*)h(R^*)^2\geq0,
    \end{align*}
    that gives the claim.
\end{proof}
    We are now ready for the proof of the first result stated in \S \ref{Introduction}.
    \begin{proof}[Proof of Theorem \ref{Teo1}]
        Let $\alpha\in(0,1)$ and consider $f(R)=T_\gamma(R)^\alpha=T_\gamma(B_R)^\alpha$. We want to prove that there exists a threshold $\alpha_n$ such that, if $\alpha\in [\alpha_n,1)$, then $f''(R)>0$. From the definition we have
        \begin{equation*}
            f''(R)=\alpha T_\gamma(R)^{\alpha-1}T_\gamma''(R)+\alpha(\alpha-1)T_\gamma(R)^{\alpha-2}T_\gamma'(R)^2,
        \end{equation*}
        where $T_\gamma$, $T_\gamma'$, $T_\gamma''$ are given in \eqref{T}-\eqref{T'}-\eqref{T''}. Hence, it holds $f''(R)>0$ if and only if
        \begin{equation*}
            T_\gamma(R)T_\gamma''(R)+(\alpha-1)T_\gamma'(R)^2\geq0,
        \end{equation*}
        which is equivalent to 
        \begin{equation*}
            \begin{split}
                \alpha&\geq -\frac{T_\gamma(R)T_\gamma''(R)}{T_\gamma'(R)^2}+1\\
                &=\frac{R^{2n-2}e^{-R^2}}{\left(\int_0^R s^{n-1}e^{-\frac{s^2}{2}}\,ds \right)^3}\int_0^R\frac{\left(\int_0^t s^{n-1}e^{-\frac{s^2}{2}}\,ds \right)^2}{t^{n-1}e^{-\frac{t^2}{2}}}\,dt\left[\frac{n-1-R^2}{R^ne^{-\frac{R^2}{2}}}\int_0^R s^{n-1}e^{-\frac{s^2}{2}}\,ds-2 \right]+1=:\alpha_n(R).
            \end{split}
        \end{equation*}
        In the remaining part of the proof, we shall prove that $\alpha_n(R)\leq \frac{1}{3}$. To do this, we will prove that $\alpha_n(R)=v'(R)$, where $v(R)$ is defined in \eqref{newdef}. Since
        \begin{equation*}
            J'(R)=\frac{I(R)^2}{I'(R)}=h(R)I(R),
        \end{equation*}
        we have that $T'_\gamma(R)=c_nh(R)I(R)$, where all these quantities are defined in \eqref{newdef}. Since 
        \begin{equation*}
            I''(R)=[(n-1)R^{n-2}-R^n]e^{-\frac{R^2}{2}}=p(R)I'(R),
        \end{equation*}
        and $h'(R)=1-p(R)h(R)$, we get
        \begin{equation*}
            T''_\gamma(R)=c_n(h(R)I(R))'=c_nI(R)(1+h'(R)).
        \end{equation*}
        Then
        \begin{equation*}
            \alpha_n(R)=1-\frac{T_\gamma(R)T''_\gamma(R)}{T'_\gamma(R)^2}=1-\frac{J(R)(1+h'(R))}{h(R)^2I(R)}=1-\frac{(1+h'(R))v(R)}{h(R)}.
        \end{equation*}
        Computing $v'(R)$, a straightforward computation shows that $\alpha_n(R)=v'(R)$. Assume now by contradiction that there exists $R_0>0$ and $n\geq1$ such that $\alpha_n(R_0)>1/3$. By Lemma \ref{Lemma1} and the continuity of $\alpha_n(R)$ we get $\alpha_n(R)<1/3$ for all $R\in(0,\delta)$, with $\delta>0$ sufficiently small. In the same way, by Lemma \ref{Lemma2}, we have $\alpha_n(R)<1/3$ for $R>M$, with $M>0$ sufficiently large. Since $\alpha_n(R_0)>1/3$ with $R_0\in[\delta,M]$, by the Weierstrass theorem we have a maximum $R^*\in(\delta,M)$ with 
        \begin{equation*}
            \alpha_n(R^*)>\frac{1}{3},\quad\frac{d}{dR}\alpha_n(R)\bigg|_{R=R^*}=0,
        \end{equation*}
        that is a contradiction from Lemma \ref{Lemma4}.
    \end{proof}
    
    \begin{oss}[Equality case]
        Consider $F(t)=T_\gamma(R(t))^\frac{1}{3}$, where $R(t) = (1-t)R_0 + tR_1$ and $t\in (0,1)$. The statement of the theorem is that $G(t)=F(t)-(1-t)F(0)-tF(1)\leq0$ for any $t\in(0,1)$. Since $G(0)=G(1)=0$ and $G(t)$ is convex, if for some $t_0\in(0,1)$ it holds $G(t_0)=0$, then it must be $G(t)=0$ for all $t\in(0,1)$. Hence, we can assume that $G(t)=0$ for all $t\in(0,1)$. This is equivalent to $F''(t)=0$ for all $t\in(0,1)$, and the latter condition is equivalent to the following one
        \begin{equation}\label{eqcond}
            R'(t)^2\left(T_\gamma(R)T_\gamma''(R)-\frac{2}{3}T_\gamma'(R)^2 \right)=0.
        \end{equation}
        We want to show that the quantity in brackets can never be zero. To do this, recall from the proof of Theorem \ref{Teo1} that $T_\gamma(R)T_\gamma''(R)+(\alpha-1)T_\gamma'(R)^2>0$ for all $\alpha> \alpha_n(R)$. Moreover, the previous inequality is an equality if and only if $\alpha=\alpha_n(R)$. In this case, $\alpha=1/3$ and $\alpha_n(R)<1/3$, where the equality is achieved only for $n=1$ as $R\rightarrow0^+$. Therefore, we get from \eqref{eqcond} that $R'(t)=0$ for all $t\in(0,1)$ which implies that $R_0=R_1$. 
    \end{oss}

\textbf{Funding:}
  The first named author is supported by the grant PID2021-124157NB-I00, funded by MCIN/AEI/10.13039/501100011033/``ERDF A way of making Europe'', as well as by the grant  ``Proyecto financiado por la CARM a través de la convocatoria de Ayudas a proyectos para el desarrollo de investigación científica y técnica por grupos competitivos, incluida en el Programa Regional de Fomento de la Investigación Científica y Técnica (Plan de Actuación 2022) de la Fundación Séneca-Agencia de Ciencia y Tecnología de la Región de Murcia, REF. 21899/PI/22''. The second named author is partially supported by Gruppo Nazionale per l'Analisi Matematica, la Probabilità e le loro Applicazioni (GNAMPA) of  Istituto Nazionale di Alta Matematica (INdAM) and he is partially supported by INdAM
GNAMPA 2026 Project "Processi di diffusione non-lineari: regolarità e classificazione delle soluzioni",  CUP E53C25002010001.
  \\
  
  \textbf{Acknowledgments:}
  This work was started while the second named author was visiting the
  Department of Mathematics at the University of Murcia, under the supervision of Professor María A. Hernández Cifre and Professor Jesús Yepes Nicolás, whom we thank for their valuable advice and support.
  \\
  
  \textbf{Conflict of interest:}
   The authors declare that they have no conflicts of interest with respect to this work.
  


\bibliographystyle{plain}
\bibliography{biblio}
\appendix
\section{Appendix}\label{Appendix}
     We now give the proof of Proposition \ref{TeoHalfSp}. To do this, we need the following elementary lemma. A proof is included for the sake of completeness. 
    \begin{lemma}\label{lemmaTeoHS}
        Let $s\in\mathbb{R}$, then
        \begin{equation*}
            \int_s^{+\infty}e^{-\frac{t^2}{2}}\,dt\geq\frac{s}{s^2+1}e^{-\frac{s^2}{2}}.
        \end{equation*}
    \end{lemma}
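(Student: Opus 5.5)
We need to prove that for all $s\in\mathbb{R}$,
\begin{equation*}
\int_s^{+\infty}e^{-\frac{t^2}{2}}\,dt\geq\frac{s}{s^2+1}e^{-\frac{s^2}{2}}.
\end{equation*}
We split into the cases $s\le 0$ and $s>0$, and in the second case compare derivatives of both sides, using that they agree at infinity.

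\textbf{Case $s\le0$.} The right-hand side is $\le 0$ while the left-hand side is positive, so the inequality is immediate.

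\textbf{Case $s>0$.} Set
\begin{equation*}
g(s)=\int_s^{+\infty}e^{-\frac{t^2}{2}}\,dt-\frac{s}{s^2+1}e^{-\frac{s^2}{2}}.
\end{equation*}
Both terms tend to $0$ as $s\to+\infty$, so $g(s)\to0$. It therefore suffices to show that $g$ is nonincreasing on $(0,+\infty)$, since then $g(s)\ge\lim_{r\to+\infty}g(r)=0$.

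\textbf{Derivative of $g$.} Writing $q(s)=s/(s^2+1)$, we have
\begin{equation*}
g'(s)=-e^{-\frac{s^2}{2}}-\bigl(q'(s)-s\,q(s)\bigr)e^{-\frac{s^2}{2}}.
\end{equation*}
Here
\begin{equation*}
q'(s)=\frac{1-s^2}{(s^2+1)^2},\qquad s\,q(s)=\frac{s^2}{s^2+1}=\frac{s^2(s^2+1)}{(s^2+1)^2}.
\end{equation*}
Over the common denominator $(s^2+1)^2$, the bracket $1+q'(s)-s\,q(s)$ has numerator
\begin{equation*}
(s^2+1)^2+(1-s^2)-s^2(s^2+1)=s^4+2s^2+1+1-s^2-s^4-s^2=2.
\end{equation*}
Hence
\begin{equation*}
g'(s)=-\frac{2}{(s^2+1)^2}\,e^{-\frac{s^2}{2}}<0.
\end{equation*}
So $g$ is strictly decreasing and tends to $0$ at $+\infty$, which gives $g>0$ on $(0,+\infty)$ and proves the claim.

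The only step needing care is this algebraic simplification of the derivative. The argument works equally well on all of $\mathbb{R}$, since $g'<0$ everywhere, which would make the case split unnecessary; I would still keep the trivial case $s\le0$ separate for clarity.
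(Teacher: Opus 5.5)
Your proof is correct and is essentially the paper's argument: the paper sets $f(t)=\frac{t}{t^2+1}e^{-t^2/2}$, shows $-f'(t)=\bigl(1-\frac{2}{(t^2+1)^2}\bigr)e^{-t^2/2}\le e^{-t^2/2}$ and integrates over $[s,+\infty)$. This is exactly your identity $g'(s)=-\frac{2}{(s^2+1)^2}e^{-s^2/2}$ combined with $g(s)\to 0$; the paper does not split into cases since, as you note, the argument works for every $s\in\mathbb{R}$, and your denominator $(s^2+1)^2$ is the correct one where the paper's displayed formula for $f'$ has a misprint.
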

    \begin{proof}
        Consider the function
        \begin{equation*}
            f(t)=e^{-\frac{t^2}{2}}\frac{t}{t^2+1}.
        \end{equation*}
        Then the first derivative is given by
        \begin{equation*}
            f'(t)=-e^{-\frac{t^2}{2}}\frac{t^4+2t^2-1}{(t^2+1)}.
        \end{equation*}
        Since
        \begin{equation*}
            \frac{t^4+2t^2-1}{(t^2+1)}=1-\frac{2}{(t^2+1)^2}\leq1,
        \end{equation*}
        we can infer that 
        \begin{equation*}
            -f'(t)=e^{-\frac{t^2}{2}}\frac{t^4+2t^2-1}{(t^2+1)}\leq e^{-\frac{t^2}{2}}
        \end{equation*}
        for all $t\in\mathbb{R}$.
        Integrating the latter we get
        \begin{equation*}
            \int_s^{+\infty}e^{-\frac{t^2}{2}}\,dt\geq\int_s^{+\infty}-f'(t)\,dt=f(s)-\lim_{t\rightarrow+\infty}f(t).
        \end{equation*}
        Since $\lim_{t\rightarrow+\infty}f(t)=0$, the claim is proved.
    \end{proof}
    \begin{proof}[Proof of Proposition \ref{TeoHalfSp}]
        Let $s\in\mathbb{R}$ and consider the half-space
        \begin{equation*}
            H_s=\{x=(x_1,\dots,x_n)\in\mathbb{R}^n\,:\, x_1\geq s\}.
        \end{equation*}
        We recall that (see \cite[\S2.3]{HL}) the Gaussian torsional rigidity of $H_s$ is given by
        \begin{equation*}
            T_\gamma(s):=T_\gamma(H_s)=\frac{1}{\sqrt{2\pi}}\int_s^{+\infty}e^\frac{t^2}{2}\left(\int_t^{+\infty}e^{-\frac{\tau^2}{2}}\,d\tau \right)^2\,dt.
        \end{equation*}
        We thus want to show that, for all $s\in\mathbb{R}$, $T_\gamma''(s)\geq0$. A straightforward computation gives 
        \begin{equation*}
            \begin{split}
                T_\gamma'(s)&=-\frac{1}{\sqrt{2\pi}}e^\frac{s^2}{2}\left(\int_s^{+\infty}e^{-\frac{t^2}{2}}\,dt \right)^2,\\
                T_\gamma''(s)&=-\frac{se^\frac{s^2}{2}}{\sqrt{2\pi}}\left(\int_s^{+\infty}e^{-\frac{t^2}{2}}\,dt \right)^2+\frac{2}{\sqrt{2\pi}}\int_s^{+\infty}e^{-\frac{t^2}{2}}\,dt.
            \end{split}
        \end{equation*}
        Hence, $T_\gamma''(s)\geq0$ if and only if
        \begin{equation*}
            2-se^\frac{s^2}{2}\int_s^{+\infty}e^{-\frac{t^2}{2}}\,dt\geq0.
        \end{equation*}
        The latter is clearly true when $s\leq0$. Then, given
        \begin{equation*}
            f(s)=se^\frac{s^2}{2}\int_s^{+\infty}e^{-\frac{t^2}{2}}\,dt,
        \end{equation*}
        we need to show that $f(s)\leq2$. Indeed, we will prove that $f(s)\leq1$ by using the following two facts:
        \begin{itemize}
            \item $f$ is monotonically increasing,
            \item $f(s)\leq\lim_{s\rightarrow+\infty}f(s)=1.$
        \end{itemize}
        On the one hand, we get from Lemma \ref{lemmaTeoHS}
        \begin{equation*}
            \begin{split}
                f'(s)&=e^\frac{s^2}{2}\int_s^{+\infty}e^{-\frac{t^2}{2}}\,dt+s^2e^\frac{s^2}{2}\int_s^{+\infty}e^{-\frac{t^2}{2}}\,dt-s\\
                &\geq \frac{s}{1+s^2} + \frac{s^3}{1+s^2}-s=0.
            \end{split}
        \end{equation*}
        On the other hand, using for instance L’H\^opital’s rule 
        \begin{equation*}
            \lim_{s\rightarrow+\infty}f(s)=\lim_{s\rightarrow+\infty}\frac{-e^{-\frac{s^2}{2}}}{-e^\frac{s^2}{2}-s^2e^\frac{s^2}{2}}s^2e^{s^2}=\lim_{s\rightarrow+\infty}\frac{s^2}{1+s^2}=1.
        \end{equation*}
        as we wanted to see.
    \end{proof}
    
    We conclude with the following discussion: recall from Example \ref{Example-Tor-Exp}, that the sign of $G_\gamma''(R)$, where $G_\gamma(R)=T_\gamma(R)^\frac{1}{n+2}$, is characterized by the sign of the following function
    \begin{equation*}
        f(n,R)=2-\frac{n-1-R^2}{R^ne^{-\frac{R^2}{2}}}\int_0^R s^{n-1}e^{-\frac{s^2}{2}}\,ds-\frac{n+1}{n+2}\frac{\left(\int_0^R s^{n-1}e^{-\frac{s^2}{2}}\,ds \right)^3}{R^{2n-2}e^{-R^2}}\left(\int_0^R\frac{\left(\int_0^t s^{n-1}e^{-\frac{s^2}{2}}\,ds \right)^2}{t^{n-1}e^{-\frac{t^2}{2}}}\,dt\right)^{-1}.
    \end{equation*}
    Since the integrals that appear in the definition of $f(n,R)$ cannot be solved in terms of elementary functions, we limit ourselves to showing the plot of this function for different values of $n$.
    \begin{figure}[H]
        \centering
        \includegraphics[width=7cm]{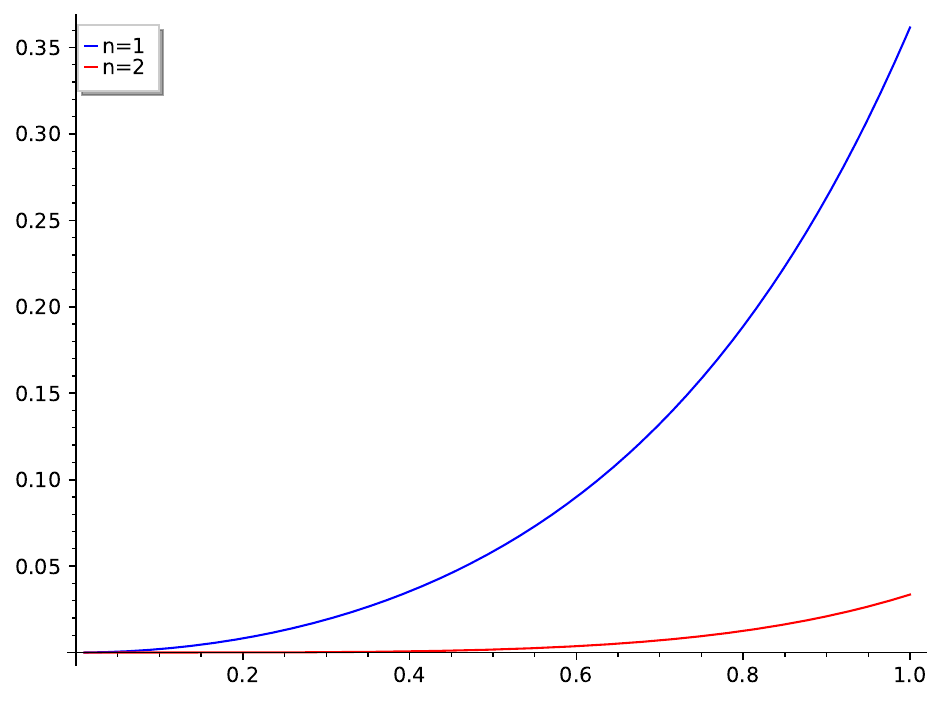}
        \caption{Plot of $f(1,R)$ and $f(2,R)$.}
        \label{Fig1}
    \end{figure}
    As can be seen from Figure \ref{Fig1}, $f(2,R)$ is positive for all $R>0$; this is equivalent to say that
    \begin{equation*}
        T_\gamma(B_{R_t})^\frac{1}{4}\leq (1-t)T_\gamma(B_{R_0})^\frac{1}{4}+tT_\gamma(B_{R_1})^\frac{1}{4},
    \end{equation*}
    for all $R_0,\,R_1>0$, where $R_t=(1-t)R_0+tR_1$. Hence, attending to the case $n=1$, the following question remains open:
    \begin{itemize}
        \item [\textbf{(Q)}]  Let $n\in \{1,2\}$ and $\Omega_0, \Omega_1$ be open, bounded and centrally symmetric subsets of $\mathbb{R}^n$. Is the functional $t \mapsto T_\gamma(\Omega_t)^\frac{1}{n+2}$ convex?
    
    \end{itemize}
    
    The scenario is different when we consider $n\geq3$.
    \begin{figure}[H]
        \centering
        \includegraphics[width=7cm]{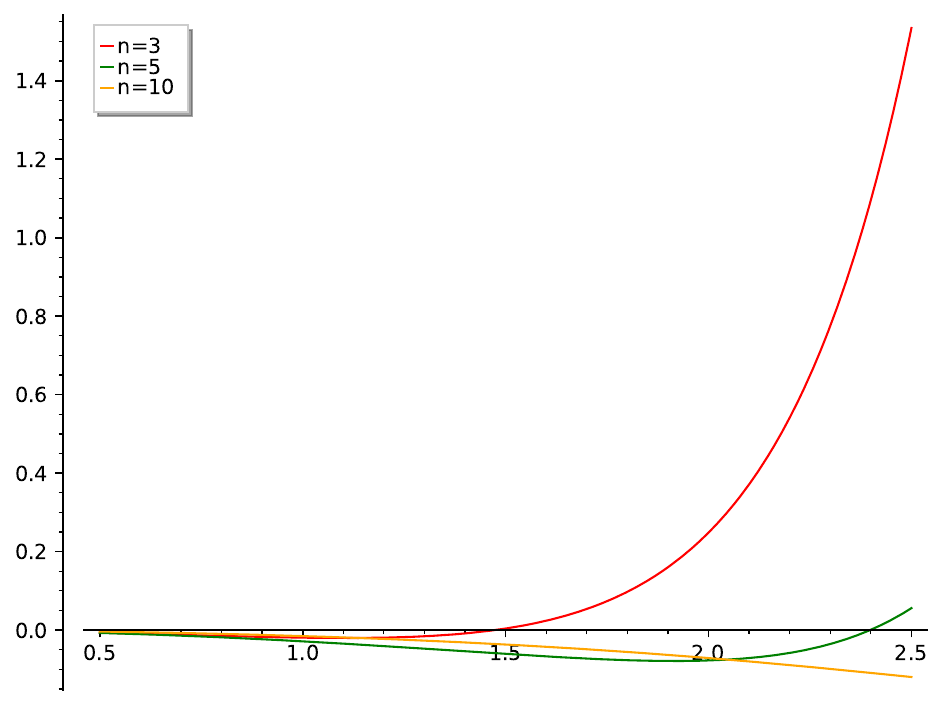}
        \caption{Plot of $f(3,R)$, $f(5,R)$, and $f(10,R)$.}
        \label{Fig2}
    \end{figure}
    Indeed, as can be seen from Figure \ref{Fig2}, $f(n,R)$ changes sign, for instance, when $n=3,\,5,\,10$. In particular we underline that this phenomenon seems to occur for every $n\geq3$. In this case, we can conclude that neither concavity nor convexity properties apply to the functional $T_\gamma(\Omega)^\frac{1}{n+2}$, even if we restrict to sets that are centrally symmetric.

\Addresses
\end{document}